\def\bB {\mathbf{B}}
\def\bC {\mathbf{C}}
\def\bN {\mathbf{N}}
\def\bR {\mathbf{R}}
\def\bS {\mathbf{S}}
\def\bT {\mathbf{T}}
\def\bZ {\mathbf{Z}}
\def\cA {\mathcal{A}}
\def\cD {\mathcal{D}}
\def\cK {\mathcal{K}}
\def\cL {\mathcal{L}}
\def\cP {\mathcal{P}}
\def\cT {\mathcal{T}}
\def\a {{\alpha}}
\def\b {{\beta}}
\def\g {{\gamma}}
\def\Ga {{\Gamma}}
\def\eps {{\epsilon}}
\def\th {{\theta}}
\def\ka {{\kappa}}
\def\l {{\lambda}}
\def\L {{\Lambda}}
\def\si {{\sigma}}
\def\om {{\omega}}
\def\Om {{\Omega}}
\def\d {{\partial}}
\def\grad {{\nabla}}
\def\Dlt {{\Delta}}
\def\rstr {{\big |}}
\def\indc {{\bf 1}}
\def\la {\langle}
\def\ra {\rangle}
\def \La {\bigg\langle}
\def \Ra {\bigg\rangle}
\def \lA {\big\langle \! \! \big\langle}
\def \rA {\big\rangle \! \! \big\rangle}
\def \LA {\bigg\langle \! \! \! \! \! \; \bigg\langle}
\def \RA {\bigg\rangle \! \! \! \! \! \; \bigg\rangle}
\def\dd{\,\mathrm{d}}
\def\rI{\mathrm{I}}
\newcommand{\Div}{\operatorname{div}}
\newcommand{\Ker}{\operatorname{Ker}}
\newcommand{\IM}{\operatorname{Im}}
\newcommand{\ba}{\begin{aligned}}
\newcommand{\ea}{\end{aligned}}
\newcommand{\be}{\begin{equation}}
\newcommand{\ee}{\end{equation}}
\newcommand{\lb}{\label}
\newtheorem{Thm}{Theorem}[section]
\newtheorem{Lem}[Thm]{Lemma}
\begin{document}


\title[Linear Boltzmann Fractional Diffusion]{Linear Boltzmann Equation\\ and Fractional Diffusion}

\author{Claude Bardos}
\address[C.B.]{Laboratoire J.-L. Lions, BP 187, 75252 Paris Cedex 05, France}
\email{claude.bardos@gmail.com}

\author{Fran\c cois Golse}
\address[F.G.]{CMLS, \'Ecole polytechnique, CNRS, Universit\'e Paris-Saclay , 91128 Palaiseau Cedex, France}
\email{francois.golse@polytechnique.edu}

\author{Ivan Moyano}
\address[I.M.]{Laboratoire J.-L. Lions, BP 187, 75252 Paris Cedex 05, France}
\email{ivan.moyano@math.cnrs.fr}

\begin{abstract}
Consider the linear Boltzmann equation of radiative transfer in a half-space, with constant scattering coefficient $\si$. Assume that, on the boundary of the half-space, the radiation intensity satisfies the Lambert (i.e. diffuse) reflection law 
with albedo coefficient $\a$. Moreover, assume that there is a temperature gradient on the boundary of the half-space, which radiates energy in the half-space according to the Stefan-Boltzmann law. In the asymptotic regime where
$\si\to+\infty$ and $1-\a\sim C/\si$, we prove that the radiation pressure exerted on the boundary of the half-space is governed by a fractional diffusion equation. This result provides an example of fractional diffusion asymptotic limit of a 
kinetic model which is based on the harmonic extension definition of $\sqrt{-\Dlt}$. This fractional diffusion limit therefore differs from most of other such limits for kinetic models reported in the literature, which are based on specific
properties of the equilibrium distributions (``heavy tails'') or of the scattering coefficient as in [U. Frisch-H. Frisch: Mon. Not. R. Astr. Not. \textbf{181} (1977), 273--280].
\end{abstract}

\keywords{Linear Boltzmann equation, Radiative transfer equation, Diffusion approximation, Fractional diffusion}

\subjclass{45K05, 45M05, 35R11 (82C70, 85A25)}

\maketitle


\section{Introduction}


The diffusion approximation for the linear Boltzmann equation has been known for a long time and in very different contexts, such as nuclear engineering (see chapter IX in \cite{WeinbergWigner}), or radiative transfer (see chapter III.2 in 
\cite{Pomran}). There are several proofs of the validity of the diffusion approximation, involving rather different mathematical methods: stochastic processes \cite{Hasmin, Papanico, BLP}, Hilbert expansion \cite{LK,BSS}, or chapter XXI, \S 5
in \cite{DL}, moment method \cite{BGP,BBGS} \dots

However, there are several situations where the same type of scaling limits of the linear Boltzmann equation lead to fractional (or more generally nonlocal) diffusion equation. A first class of linear Boltzmann equations leading to a fractional 
diffusion equation includes situations where the scattering rate is not uniformly large over the whole energy range of the particle system considered. See \cite{FF} for an example in radiative transfer. 

A second class of linear Boltzmann equations leading to a fractional diffusion equation includes the case of collision integrals whose equilibrium solutions have infinite second order moments: see for instance \cite{BenMelP, Mellet, MMM, Schm}.

In the present work, we give an example of a completely different type of asymptotic limit of a linear Boltzmann equation leading to a fractional diffusion equation. 

Before describing more precisely the physical problem considered in the present paper, we recall the following well-known fact from classical analysis, which the reader might find convenient to keep in mind.

\subsection{Fractional Diffusion and Harmonic Extensions}\lb{SS-HarmExt}


Let\footnote{We denote by $\bT^d$ the $d$-dimensional torus $\bT^d=\bR^d/\bZ^d$.} $f\in L^p(\bT^d)$, with $1\le p<\infty$, and let $0<\g<2$. Then
$$
(-\Dlt_x)^{\g/2}f(x)=-\d_yF(x,0)\,,
$$
where $F\equiv F(x,y)\in C_b([0,\infty);L^p(\bT^d))$ is the unique solution to the boundary value problem
$$
\left\{
\ba
{}&-\Dlt_xF(x,y)-\g^2c_\g^{\g/2}y^{2-2/\g}\d^2_yF(x,y)=0\,,&&\qquad (x,y)\in\bT^d\times(0,+\infty)\,,
\\
&F(x,0)=f(x)\,,&&\qquad x\in\bT^d\,,
\ea
\right.
$$
where
$$
c_\g=2^{-\g}|\Ga(-\g/2)|/\Ga(\g/2)
$$
(See for instance \cite{Kwasnicki}, Theorem 1.1 (j) for the analogous result, assuming that the function $f$ satisfies $f\in L^p(\bR^d)$ instead of $L^p(\bT^d)$.))

\subsection{The Model}


We seek to describe radiative transfer in a homogeneous medium filling the half-space 
$$
\{z:=(x,y)\,:\,y>0\quad\hbox{ and }x\in\bR^2\}\,.
$$
We assume that true absorption and emission are negligible effects in this medium. We assume that the only process of physical importance in the medium is scattering (of Thomson type), which means that the elementary scattering process 
at the level of particles is independent of, and does not modify the frequency of photons. The scattering coefficient $\si>0$ and scattering transition probability (also called ``phase function'' in \S 3 of \cite{Chandra}) $p\equiv p(\om,\om')$ are 
assumed to be independent of the position variable $z$. 

Under these assumptions, the radiative intensity (integrated in the frequency variable) at the position $z$ and in the direction $\om$, henceforth denoted $f\equiv f(z,\om)$, satisfies the radiative transfer equation (see chapter I in \cite{Chandra}):
\be\lb{RTEq1}
\om\cdot\grad_zf(z,\om)=\si\int_{\bS^2}p(\om,\om')(f(z,\om')-f(z,\om))\dd\om\,,\quad y>0\,,\,\,|\om|=1\,.
\ee

On the plane of equation $y=0$ (the boundary of the half-space), we assume that the radiation field obeys the Lambert reflection law, with albedo $\a$ (see formula (81) in \S 47 on p. 146 in \cite{Chandra}). On the other hand, we assume that
the temperature at the point $x$ of the plane of equation $y=0$ is $T(x)$, so that the surface emits a radiative intensity $acT(x)^4$ in all directions at the point $x$, according to the Stefan-Boltzmann law. The constant $a$ is 
$$
a:=\frac{8\pi^5k_B^4}{15c^2\hbar^3}\,,
$$
where $k_B$ is the Boltzmann constant, $c$ is the speed of light in the vacuum, and $\hbar$ is the reduced Planck constant.

In other words, the radiative intensity $f$ satisfies the boundary condition
\be\lb{BC1}
f(x,0,\om)=aT(x)^4+\frac\a\pi\int_{\bS^2}f(x,0,\om')(\om'_y)^-\dd\om'\,,\qquad\om_y>0\,.
\ee

\subsection{Scaling and Other Assumptions}


We are concerned with the following limit of the radiative transfer equation \eqref{RTEq1} with the boundary condition \eqref{BC1}:

\smallskip
\noindent
(a) $\si\gg 1$ (high scattering regime);

\noindent
(b) $0<1-\a\ll 1$ (high albedo surface).

\smallskip
We shall adopt the following mathematical setting. For any integer $d\ge 1$, set $Z:=\bT^d\times(0,+\infty)$; points in $Z$ are denoted by $z=(x,y)$ with $x\in\bT^d$ and $y>0$. Accordingly, unit tangent vectors to $Z$ are denoted 
by\footnote{We denote by $\bS^d$ the unit sphere in $\bR^{d+1}$, and by $\bB^d$ the unit ball in $\bR^d$. We also denote by $|\bS^d|$ the $d$-dimensional area of $\bS^d$, and by $|\bB^d|$ the $d$-dimensional Lebesgue 
measure of $\bB^d$. We recall that $d|\bB^d|=|\bS^{d-1}|$ for all $d\ge 1$.} $\om=(\om_x,\om_y)\in\bS^d$, with $\om_x\in\bR^d$ and $\om_y\in\bR$. One has
$$
\int_{\bS^d}\om_y^\pm\dd\om=\int_0^{\pi/2}\cos\th(\sin\th)^{d-1}\dd\th=\frac1d|\bS^{d-1}|=|\bB^d|\,.
$$
We shall henceforth use the following notation:
$$
\la\phi\ra:=\frac1{|\bS^d|}\int_{\bS^d}\phi(\om)\dd\om\,,\qquad\phi\in C(\bS^d)\,,
$$
and
$$
\lA\psi\rA_\pm:=\frac1{|\bB^d|}\int_{\bS^d}\psi(\om)\om_y^\pm\dd\om\,,\qquad\phi\in C(\bS^d)\,.
$$

\smallskip
We assume that condition (b) is realized by taking the albedo coefficient $\a$ of $\d Z$ of the form
$$
\a:=\frac{\ka\si}{1+\ka\si}
$$
where $\ka>0$ is a constant, and we set
$$
S(x):=(1+\ka\si)aT(x)^4\,.
$$
Since $\b_2=\pi$, the boundary condition \eqref{BC1} is recast as
\be\lb{BC}
f(x,0,\om)=\frac{S(x)}{1+\ka\si}+\frac{\ka\si}{1+\ka\si}\lA f(x,0,\cdot)\rA_-\,,\quad\om_y>0\,.
\ee
Observing that the right hand side of \eqref{BC} is independent of $\om$, this boundary condition implies that $f(x,0,\om)$ is independent of $\om$ for $\om_y>0$, so that
$$
f(x,0,\om)=\frac{f(x,0,\om)}{1+\ka\si}+\frac{\ka\si}{1+\ka\si}\lA f(x,0,\cdot)\rA_+\,.
$$
Substituting this expression on the left hand side of \eqref{BC}, we arrive at the following equivalent formulation of the boundary condition \eqref{BC}:
\be\lb{BCC}
f(x,0,\om)=S(x)-\frac{\ka\si}{|\bB^d|}\int_{\bS^d}f(x,0,\om')\om'_y\dd\om'\,,\quad\om_y>0\,.
\ee

We further assume that $p\equiv p(\om,\om')$ is a measurable function defined a.e. on $\bS^d\times\bS^d$ satisfying the following condition:
\be\lb{H1p}
p(\om,\om')=p(\om',\om)>0\hbox{ a.e. on }\bS^d\times\bS^d\,,\quad\int_{\bS^d}p(\om,\om')\dd\om'=1\hbox{ for a.e. }\om\in\bS^d\,.
\ee
We shall denote by $\cL$ the bounded linear operator defined on $L^\infty(\bS^d)$ by the formula
$$
\cL\phi(\om)=\int_{\bS^d}p(\om,\om')(\phi(\om)-\phi(\om'))\dd\om'=\phi(\om)-\int_{\bS^d}p(\om,\om')\phi(\om')\dd\om'\,.
$$

With these elements of notation, the boundary value problem for the radiative transfer equation \eqref{RTEq1} with boundary condition \eqref{BC1} takes the form
\be\lb{RTPb}
\left\{
\ba
{}&\om\cdot\grad_zf_\si(z,\om)+\si(\cL f_\si)(z,\cdot)=0\,,&&\quad(z,\om)\in Z\times\bS^d\,,
\\	\\
&f_\si(x,0,\om)=\frac{S(x)}{1+\ka\si}+\frac{\ka\si}{1+\ka\si}\lA f_\si(x,0,\cdot)\rA_-\,,&&\quad x\in\bT^d\,,\,\,\om_y>0\,,
\ea
\right.
\ee
where $$
(\cL f_\si)(z,\om):=(\cL f_\si(z,\cdot))(\om)\,.
$$

\medskip
The purpose of the present work is to study the boundary value problem \eqref{RTPb} in the asymptotic regime $\si\to+\infty$.


\section{Main Result}


\subsection{Heuristic approach}

 
In order to gain some intuition on this problem, we apply the Hilbert expansion method. This method is named after Hilbert on the basis of \cite{Hilbert1912}, where it has been used for the first time on the Boltzmann equation in the context 
of the kinetic theory of gases. Its application to the linear Boltzmann equation is discussed in detail in \cite{LK, BSS}. 

Thus, we seek $f_\si$ as the formal power series
\be\lb{Hilbert}
f_\si(z,\om)=\sum_{j\ge 0}\si^{-j}f_j(z,\om)\,,\quad \grad^m_zf_j\in C(Z\times\bS^d)\hbox{ for all integers }j,m\ge 0\,.
\ee
Substituting this ansatz in the radiative transfer equation leads to the sequence of integral equations:

\smallskip
\noindent
\textit{Order $O(\si)$:}
$$
\cL f_0(z,\om)=0\,,
$$
\textit{Order $O(1)$:}
$$
\cL f_1(z,\om)=-\om\cdot\grad_zf_0\,,
$$
\bigskip
\dots\,\dots\,\dots\,\dots\,\dots\,\dots\,\dots\,\dots\,\dots\,\dots\,\dots\,\dots\,\dots\,\dots\,\dots\,\dots\,

\medskip
\noindent
\textit{Order $O(\si^{-j})$:}
$$
\cL f_{j+1}(z,\om)=-\om\cdot\grad_zf_j\,.
$$
\bigskip
\dots\,\dots\,\dots\,\dots\,\dots\,\dots\,\dots\,\dots\,\dots\,\dots\,\dots\,\dots\,\dots\,\dots\,\dots\,\dots\,

\bigskip
\begin{Lem}\lb{L-KerL}
Under assumption \eqref{H1p}, the linear operator $\cL$ is bounded on $L^2(\bS^d;\bC^d)$ and satisfies
$$
\|\cL\|\le 2\,,\quad\hbox{ and }\quad\Ker\cL=\{\hbox{ constants }\}=\bC\,.
$$
\end{Lem}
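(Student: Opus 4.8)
The plan is to exploit the symmetry $p(\om,\om')=p(\om',\om)$ to realize $\cL$ as a non-negative self-adjoint operator whose associated quadratic form is of Dirichlet type; this single computation delivers both the norm bound and the description of $\Ker\cL$ at once. Since $\cL$ acts componentwise on $L^2(\bS^d;\bC^d)$, it suffices to treat the scalar operator on $L^2(\bS^d;\bC)$, the vector-valued statements following coordinate by coordinate.

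First I would record the splitting $\cL=\rI-K$, where $K\phi(\om)=\int_{\bS^d}p(\om,\om')\phi(\om')\dd\om'$ is the integral operator with kernel $p$. Using the normalization in \eqref{H1p} together with Fubini, one checks that $K$ (hence $\cL$) is a well-defined bounded operator and is symmetric on $L^2(\bS^d)$ by the symmetry of $p$. The key step is then the symmetrization of the quadratic form: writing $\la\cL\phi,\phi\ra$ as a double integral, interchanging the roles of $\om$ and $\om'$, using $p(\om,\om')=p(\om',\om)$, and averaging the two resulting expressions, I obtain
$$
\la\cL\phi,\phi\ra=\frac12\int_{\bS^d}\int_{\bS^d}p(\om,\om')|\phi(\om)-\phi(\om')|^2\dd\om'\dd\om\ge 0\,,
$$
so that $\cL$ is non-negative.

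For the norm estimate I would bound this form from above via the elementary inequality $|a-b|^2\le 2(|a|^2+|b|^2)$, after which the normalization $\int_{\bS^d}p(\om,\om')\dd\om'=1$ and its symmetric counterpart $\int_{\bS^d}p(\om,\om')\dd\om=1$ turn both terms into $\|\phi\|_{L^2}^2$, giving $\la\cL\phi,\phi\ra\le 2\|\phi\|_{L^2}^2$. As $\cL$ is self-adjoint, $\|\cL\|=\sup_{\|\phi\|=1}\la\cL\phi,\phi\ra\le 2$. (Alternatively one may invoke Schur's test to get $\|K\|\le 1$ directly, whence $\|\cL\|\le\|\rI\|+\|K\|\le 2$.)

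Finally, for the kernel: the inclusion $\bC\subseteq\Ker\cL$ is immediate, since $K$ maps constants to constants by the normalization in \eqref{H1p}. For the reverse inclusion, if $\cL\phi=0$ then the non-negative form above vanishes, so $p(\om,\om')|\phi(\om)-\phi(\om')|^2=0$ for a.e.\ $(\om,\om')$; because $p>0$ a.e.\ on $\bS^d\times\bS^d$, this forces $\phi(\om)=\phi(\om')$ for a.e.\ $(\om,\om')$, whence $\phi$ is a.e.\ equal to a constant. The only genuinely non-formal point, and thus the main obstacle, is this last measure-theoretic deduction: one must argue that a.e.\ constancy on the product space descends to a.e.\ constancy of $\phi$ on $\bS^d$, which follows from Fubini by fixing a value $\om_*$ for which $\phi(\om)=\phi(\om_*)$ holds for a.e.\ $\om$.
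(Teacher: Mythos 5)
Your proof is correct, and its core --- the symmetrization identity $\int_{\bS^d}\overline{\phi}\,\cL\phi\dd\om=\tfrac12\iint p(\om,\om')|\phi(\om)-\phi(\om')|^2\dd\om\dd\om'$ followed by the positivity of $p$ to pin down the kernel --- is exactly the paper's argument (the paper descends from a.e.\ constancy on the product by averaging in $\om'$ rather than fixing an $\om_*$ by Fubini, an immaterial difference). Where you genuinely diverge is the norm bound: the paper estimates $\|\cL\phi\|_{L^2}^2$ directly, applying Cauchy--Schwarz to the inner integral to get $\|\cL\phi\|_{L^2}^2\le\iint p|\phi(\om)-\phi(\om')|^2\dd\om\dd\om'=2\int\overline{\phi}\,\cL\phi\dd\om\le 2\|\phi\|_{L^2}\|\cL\phi\|_{L^2}$, whereas you bound the quadratic form by $2\|\phi\|_{L^2}^2$ via $|a-b|^2\le 2(|a|^2+|b|^2)$ and invoke the variational characterization $\|\cL\|=\sup_{\|\phi\|=1}\langle\cL\phi,\phi\rangle$ valid for bounded non-negative self-adjoint operators (or, alternatively, Schur's test giving $\|\cK\|\le 1$, hence $\|\cL\|\le\|\rI\|+\|\cK\|\le 2$). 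All three routes are valid and yield the same constant; yours leans on the a priori self-adjointness and positivity of $\cL$ (cheap here, since a bounded symmetric operator is self-adjoint), while the paper's direct estimate avoids any appeal to spectral facts and is self-contained at the cost of one extra Cauchy--Schwarz. The Schur-test variant is arguably the most economical of all, and note that boundedness must indeed be secured before the sup-of-the-form identity is used, which your Fubini/Schur preliminary does.
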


\begin{proof}
Because of the last equality in \eqref{H1p}, one has $\bR\subset\Ker\cL$.

For each $\phi\in L^2(\bS^d)$, one has
\be\lb{phiLpsi}
\ba
\int_{\bS^d}\phi(\om)(\cL\psi)(\om)\dd\om=&\iint_{\bS^d\times\bS^d}\phi(\om)p(\om,\om')(\psi(\om)-\psi(\om'))\dd\om\dd\om'
\\
=&\iint_{\bS^d\times\bS^d}\phi(\om')p(\om,\om')(\psi(\om')-\psi(\om))\dd\om\dd\om'
\\
=&\iint_{\bS^d\times\bS^d}\tfrac12p(\om,\om')(\phi(\om)-\phi(\om'))(\psi(\om)-\psi(\om'))\dd\om\dd\om'\,.
\ea
\ee
The second equality above follows from the symmetry of $p$ in \eqref{H1p}, while the third equality is obtained by adding the right hand sides of the first and the second equalities.

If $\phi\in\Ker\cL$, then 
$$
\iint_{\bS^d\times\bS^d}p(\om,\om')(\phi(\om)-\phi(\om'))^2\dd\om\dd\om'=0
$$
because of \eqref{phiLpsi} with $\phi=\psi$. Since $p>0$ a.e. on $\bS^d\times\bS^d$,
$$
\phi(\om)-\phi(\om')=0\hbox{ for a.e. }(\om,\om')\in\bS^d\times\bS^d\,.
$$
Averaging both sides of this equality in $\om'$ implies that
$$
\phi(\om)=\la\phi\ra\hbox{ for a.e. }\om\in\bS^d\,.
$$
Thus $\phi$ is a.e. a constant, and this proves that $\Ker\cL=\{\hbox{ constants }\}$.

Finally
$$
\ba
\int_{\bS^d}|(\cL\phi)(\om)|^2\dd\om=\int_{\bS^d}\left|\int_{\bS^d}p(\om,\om')(\phi(\om)-\phi(\om'))\dd\om'\right|^2\dd\om
\\
\le\int_{\bS^d}\left(\int_{\bS^d}p(\om,\om')\dd\om'\right)\left(\int_{\bS^d}p(\om,\om')|\phi(\om)-\phi(\om')|^2\dd\om'\right)\dd\om
\\
=\iint_{\bS^d}p(\om,\om')|\phi(\om)-\phi(\om')|^2\dd\om\dd\om'=2\int_{\bS^d}\overline{\phi(\om)}(\cL\phi)(\om)\dd\om&\,,
\ea
$$
where the inequality follows from the Cauchy-Schwarz inequality applied to the inner integral, and the last equality from \eqref{phiLpsi} with $\overline{\phi}=\psi$. Applying the Cauchy-Schwarz inequality to the last right hand side, 
one obtains
$$
\int_{\bS^d}|(\cL\phi)(\om)|^2\dd\om\le 2\|\phi\|_{L^2(\bS^d)}\|\cL\phi\|_{L^2(\bS^d)}\,,
$$
which implies that $\|\cL\|\le 2$.
\end{proof}

\medskip
We conclude from this lemma the solution to the equation at order $O(\si)$:

\smallskip
\noindent
\textit{Order $O(\si)$:}
$$
f_0(z,\om)=\rho_0(z)\,,\quad\hbox{ for a.e. }(z,\om)\in Z\times\bS^d\,.
$$

\bigskip
Next we study the equation at order $O(1)$ for $f_1$. We shall need the following additional assumption on $p$:
\be\lb{H2p}
p\in L^2(\bS^d\times\bS^d)\,.
\ee 
Denoting by $\cK$ the linear operator defined by
\be\lb{DefK}
(\cK\phi)(\om):=\int_{\bS^d}p(\om,\om')\phi(\om')\dd\om'\,,\qquad\om\in\bS^d\,,
\ee
one has
$$
\cL=\rI-\cK\,.
$$
\begin{Lem}\lb{L-ImL}
The operator $\cL$ is bounded and of Fredholm type on $L^2(\bS^d;\bC^d)$, and satisfies
$$
\cL=\cL^*\,,\qquad\IM\cL=\bC^\perp=\left\{\psi\in L^2(\bS^d;\bC^d)\hbox{ s.t. }\int_{\bS^d}\psi(\om)\dd\om=0\right\}\,.
$$
\end{Lem}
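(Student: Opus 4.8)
The plan is to establish three facts about $\cL = \rI - \cK$ on the complex Hilbert space $H := L^2(\bS^d;\bC)$: self-adjointness, the Fredholm property, and the identification of the range. The self-adjointness is the cleanest of the three and I would dispatch it first. The quadratic-form identity \eqref{phiLpsi} already computed in the proof of Lemma~\ref{L-KerL} gives, for real-valued test functions, a manifestly symmetric bilinear form; passing to complex scalars by replacing $\psi$ with $\overline{\psi}$ (as was done at the end of that proof) one reads off $\la\phi,\cL\psi\ra = \la\cL\phi,\psi\ra$, hence $\cL = \cL^*$. Equivalently, I would note that assumption \eqref{H1p} makes $p$ a real symmetric kernel, so the integral operator $\cK$ defined in \eqref{DefK} is self-adjoint, and $\rI$ is trivially self-adjoint, whence $\cL = \rI - \cK$ is self-adjoint.

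Next I would use the extra hypothesis \eqref{H2p}, namely $p \in L^2(\bS^d \times \bS^d)$. This is precisely the Hilbert--Schmidt condition on the kernel of $\cK$, so $\cK$ is a Hilbert--Schmidt operator on $H$, hence compact. Therefore $\cL = \rI - \cK$ is a compact perturbation of the identity, which is the textbook criterion for $\cL$ to be Fredholm of index zero. In particular $\IM\cL$ is closed, and $\Codim \IM\cL = \dim\Ker\cL^* = \dim\Ker\cL$. By Lemma~\ref{L-KerL} we already know $\Ker\cL = \bC$ (the constants), so $\IM\cL$ has codimension one.

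To identify $\IM\cL$ explicitly, I would invoke the closed-range theorem for self-adjoint (more generally normal) operators: since $\cL = \cL^*$ and $\IM\cL$ is closed, one has the orthogonal decomposition $H = \Ker\cL \oplus \IM\cL$, equivalently $\IM\cL = (\Ker\cL)^\perp = (\Ker\cL^*)^\perp$. Using $\Ker\cL = \bC$ from Lemma~\ref{L-KerL}, this reads
$$
\IM\cL = \bC^\perp = \Big\{\psi \in H \;:\; \int_{\bS^d}\psi(\om)\dd\om = 0\Big\},
$$
since orthogonality to the constant function $\indc$ in $L^2(\bS^d)$ is exactly the vanishing of the zeroth-order moment $\la\psi,\indc\ra = \int_{\bS^d}\psi\dd\om$ (up to the normalizing factor $1/|\bS^d|$, which does not affect the vanishing condition). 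This gives the stated characterization of the range.

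The step I expect to carry the real content is the \emph{closedness} of $\IM\cL$, i.e. the Fredholm property, and that is exactly where \eqref{H2p} is indispensable: without compactness of $\cK$ one would know $\IM\cL \subseteq \bC^\perp$ by self-adjointness but could not exclude a dense, non-closed range, and the solvability of the order-$O(1)$ Hilbert equation $\cL f_1 = -\om\cdot\grad_z f_0$ would be in jeopardy. The orthogonality inclusion $\IM\cL \subseteq \bC^\perp$ is essentially free — it is just $\la\cL\phi,\indc\ra = \la\phi,\cL\indc\ra = 0$ by self-adjointness and $\indc\in\Ker\cL$ — so the entire weight of the lemma rests on upgrading this inclusion to an equality, which the Fredholm alternative supplies once the Hilbert--Schmidt bound is in place.
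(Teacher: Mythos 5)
Your proposal is correct and follows essentially the same route as the paper: assumption \eqref{H2p} makes $\cK$ a self-adjoint Hilbert--Schmidt (hence compact) operator, so $\cL=\rI-\cK$ is a self-adjoint Fredholm perturbation of the identity, and the Fredholm alternative combined with the characterization $\Ker\cL=\bC$ from Lemma~\ref{L-KerL} yields $\IM\cL=(\Ker\cL)^\perp=\bC^\perp$. The additional remarks on why closedness of the range is the essential point are accurate but not needed beyond what the paper's citation of the Fredholm alternative already provides.
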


\begin{proof}
The operator $\cK$ is a self-adjoint Hilbert-Schmidt operator on $L^2(\bS^d;\bC^d)$ since its integral kernel $p$ is a symmetric real-valued square-integrable function by \eqref{H1p}-\eqref{H2p}. In particular $\cK$ is a compact operator 
on $L^2(\bS^d;\bC^d)$ (see section 6.4.2 and Theorem 6.12 in \cite{Brezis}). This implies that $\cL$ is a self-adjoint, bounded operator on $L^2(\bS^d;\bC^d)$ of Fredholm type (see section 6.4.1 in \cite{Brezis}). 

Fredholm's alternative (Theorem 6.6 in \cite{Brezis}) states that 
$$
\IM\cL=(\Ker\cL)^\perp
$$
and we conclude the proof with the characterization of $\Ker\cL$ in Lemma \ref{L-KerL}.
\end{proof}

\smallskip
Since $\la\om\ra=0$, Fredholm's alternative implies that there exists a unique function $\Om\equiv\Om(\om)\in L^2(\bS^d)$ such that
\be\lb{DefOm}
\la\Om\ra=0\quad\hbox{ and }\quad(\cL\Om)(\om)=\om\,,\quad\hbox{ for a.e. }\om\in\bS^d\,.
\ee

\smallskip
Combining Lemmas \ref{L-KerL} and \ref{L-ImL} leads to the following result for the solution of the equation at order $O(1)$:

\smallskip
\noindent
\textit{Order $O(1)$:}
$$
f_1(z,\om)=\rho_1(z)-\Om(\om)\cdot\grad\rho_0(z)\,,\quad\hbox{ for a.e. }(z,\om)\in Z\times\bS^d\,.
$$

\smallskip
Higher order terms in the expansion are found in the same way, by solving successively for $f_{j+1}$ the integral equations
$$
\cL f_{j+1}=-\om\cdot\grad_zf_j
$$
for $j\ge 1$. We shall not pursue this line of investigation, since only $f_0$ and $f_1$ will be used in the present section.

\smallskip
At this point, we introduce a last assumption on the scattering transition probability $p$, specifically, we require that $p$ is rotationally invariant: for each $Q$ in the orthogonal group $O_{d+1}(\bR)$,
\be\lb{H3p}
p(Q\om,Q\om')=p(\om,\om')\,,\quad\hbox{ for a.e. }(\om,\om')\in\bS^d\times\bS^d\,.
\ee

Under this assumption, it has been proved in \cite{BSS} that the diffusion matrix\footnote{See formulas (40)-(44) on p. 624 in \cite{BSS}. Actually, one can deduce from equation (42) in \cite{BSS} that $\Om$ is of the form $\Om(\om)=\l\om$:
see Lemma 3 in \cite{DesvFG}, or Lemma 8 in Appendix 1 of \cite{FGBraga}. With $\Om$ of this form, one immediately concludes that $\la\om\otimes\Om\ra=\frac{\l}{d+1}\rI$.}
$$
\la\om\otimes\Om\ra\hbox{ is of the form }\la\om\otimes\Om\ra=\frac{\la\om\cdot\Om\ra}{d+1}\rI\,.
$$
Besides
$$
\la\om\cdot\Om\ra=\la(\cL\Om)\cdot\Om\ra=\tfrac12\iint_{\bS^d\times\bS^d}p(\om,\om')|\Om(\om)-\Om(\om')|^2\dd\om\dd\om'>0\,.
$$
Notice that this inequality is strict: otherwise, one would have 
$$
p(\om,\om')(\Om(\om)-\Om(\om'))=0\quad\hbox{ for a.e. }(\om,\om')\in\bS^d\times\bS^d\,.
$$
This would imply that
$$
0=\int_{\bS^d}p(\om,\om')(\Om(\om)-\Om(\om'))\dd\om'=(\cL\Om)(\om)=\om
$$
which is obviously a contradiction.

From the equation at order $O(\si)$, i.e.
$$
\cL f_2=-\om\cdot\grad_zf_1\,,
$$
we conclude that
$$
\Div_z\la\om f_1\ra=0\,.
$$
Otherwise, one would have $\om\cdot\grad_zf_1\notin\IM\cL$ according to Lemma \ref{L-ImL}. With the expression for $f_1$ obtained above, this equality takes the form
$$
\Div_z\left(-\frac{\la\om\cdot\Om\ra}{d+1}\grad_z\rho_0\right)=0\,,
$$
or equivalently
\be\lb{EqRho0}
-\Dlt\rho_0(z)=0\,,\quad z\in Z\,.
\ee

Moreover, if the Hilbert expansion \eqref{Hilbert} holds at the boundary, then we deduce from the equivalent formulation \eqref{BCC} of the boundary condition in \eqref{RTPb} that
\be\lb{BCRho0}
\ba
\rho_0(x,0)=&S(x)-\frac{\ka\si}{|\bB^d|}\int_{\bS^d}\left(\rho_1(x,0)-\frac1\si\Om(\om')\cdot\grad\rho_0(x,0)\right)\om'_y\dd\om'
\\
=&S(x)+\frac{\ka|\bS^{d}|}{|\bB^d|}\la\om_y\Om\ra\cdot\grad\rho_0(x,0)
\\
=&S(x)+\frac{\ka|\bS^d|\la\om\cdot\Om\ra}{|\bB^d|(d+1)}\d_y\rho_0(x,0)\,,\qquad x\in\bT^d\,.
\ea
\ee

Returning to the original variables, and to the case of physical interest where $d=2$, we recall that the radiation pressure $\cP(x)$ at the point $x$ of $\d Z$ is defined by the identity
$$
\frac1c\int_{\bS^2}\om\otimes\om f(x,0,\om)\dd\om=\cP(x)\rI\,.
$$
(See for instance formula (1.19) in \cite{Pomran}). Therefore, to leading order in $1/\si$, one has
$$
\cP(x)\simeq\frac{4\pi}{3c}\rho_0(x,0)\,.
$$

Since $\frac{4\pi}{3c}\rho_0$ is an harmonic extension of $\cP$, applying the formula for the fractional diffusion operator recalled in section \ref{SS-HarmExt} leads to the following equation for the radiation pressure field on the boundary:
\be\lb{EqRadP}
\tfrac34(1-\a)\cP(x)+\a\la\om\cdot\Om\ra\frac1{\si}(-\Dlt_x)^{1/2}\cP(x)=\frac{a\pi T(x)^4}{c}\,,\qquad x\in\bT^2\,.
\ee

Assumption \eqref{H3p} is verified by most of the scattering transition probabilities $p$ used in practice, such as 
$$
\ba
p(\om,\om')&=\frac1{|\bS^d|}\,,\qquad&&\hbox{(isotropic scattering)}
\\
p(\om,\om')&=\tfrac{3}{16\pi}(1+(\om\cdot\om')^2)\,,\qquad&&\hbox{(Rayleigh phase function for $d=2$)}
\ea
$$
(see formulas (30)-(31)-(33) in \S 3 or formula (192) in \S 16 of \cite{Chandra}). More generally, all scattering transition probabilities of the form
\be\lb{FormP}
p(\om,\om')=P((\om\cdot\om')^2)\,,
\ee
under the normalizing condition
$$
\int_0^1P(\mu^2)(1-\mu^2)^{\frac{d}2-1}\dd\mu=\frac1{2|\bS^{d-1}|}\,,
$$
satisfy \eqref{H3p}. In that case
$$
\int_{\bS^d}P((\om\cdot\om')^2)\om'\dd\om'=0
$$
(since the integral the right hand side of this last equality must be invariant under the substitution $\om'\mapsto-\om'$), so that
$$
\cL\om=\om\,,\quad\hbox{ and therefore }\quad\Om(\om)=\om\,.
$$
In that case
$$
\la\om\cdot\Om\ra=1\,,
$$
and the diffusion coefficient in the usual diffusion approximation of the linear Boltzmann equation is
$$
\frac{\la\om\cdot\Om\ra}{(d+1)\si}=\frac1{(d+1)\si}\,.
$$

In particular, the equation \eqref{EqRadP} satisfied by the radiation pressure field on the boundary $\d Z$ is, in this case,
$$
\tfrac34(1-\a)\cP(x)+\frac\a{\si}(-\Dlt_x)^{1/2}\cP(x)=\frac{a\pi T(x)^4}{c}\,,\qquad x\in\bT^2\,.
$$

\subsection{The Limit Theorem}


The analysis based on Hilbert's expansion presented in the previous section is only formal. A rigorous analysis of the problem based on the moment method for kinetic models leads to the following result.

\begin{Thm}\lb{T-Ppal}
Assume that $p\equiv p(\om,\om')$ is a measurable function defined a.e. on $\bS^d\times\bS^d$, satisfying \eqref{H1p}-\eqref{H2p}-\eqref{H3p}. Denote by $\Om$ the unique element of $L^2(\bS^d)$ defined by \eqref{DefOm}. 
Let $S\in W^{1,\infty}(\bT^d)$. 

\smallskip
\noindent
(a) For each $\si>0$, the boundary value problem \eqref{RTPb} has a unique solution
$$
f_\si\in L^\infty(\bT^d\times(0,+\infty)\times\bS^d)\,;
$$
this solution satisfies
$$
\|f_\si\|_{L^\infty(\bT^d\times(0,+\infty)\times\bS^d)}\le\|S\|_{L^\infty(\bT^d)}\,,
$$
and
$$
\|\d_{x_j}f_\si\|_{L^\infty(\bT^d\times(0,+\infty)\times\bS^d)}\le\|\d_{x_j}S\|_{L^\infty(\bT^d)}
$$
for all $j=1,\ldots,d$.

\noindent
(b) In the limit as $\si\to+\infty$, one has
$$
f_\si\to\rho\equiv\rho(z)\hbox{ in }L^\infty(\bT^d\times(0,+\infty)\times\bS^d)\hbox{ weak}-*
$$
and
$$
\si(f_\si-\la f_\si\ra)\to-\Om\cdot\grad\rho\hbox{ in }L^2(\bT^d\times(0,+\infty)\times\bS^d)\hbox{ weak}\,,
$$
where $\rho\equiv\rho(z)$ satisfies the properties
$$
\rho\in L^\infty(\bT^d\times(0,+\infty))\,,\qquad\grad\rho\in L^2(\bT^d\times(0,+\infty))\,,
$$
and is the weak solution of the boundary value problem
\be\lb{Diff}
\left\{
\ba
{}&-\Dlt_z\rho(z)=0\,,\qquad &&z=(x,y)\in\bT^d\times(0,+\infty)\,,
\\
&\left(\rho-\ka\la\om\cdot\Om\ra\tfrac{|\bB^{d+1}|}{|\bB^d|}\d_y\rho\right)\rstr_{y=0}=S\,,&&x\in\bT^d\,.
\ea
\right.
\ee
(c) In the limit as $\si\to+\infty$, one has
$$
f_\si\rstr_{y=0}\to R\quad\hbox{ in }L^2(\bT^d\times\bS^d;|\om_y|\dd x\dd\om)\hbox{ weak,}
$$
and
$$
\si\la\om_yf_\si\ra\rstr_{y=0}\to-\tfrac{\la\om\cdot\Om\ra}{d+1}\d_y\rho\rstr_{y=0}\hbox{ in }H^{-1/2}(\bT^d)\hbox{ weak,}
$$
where $R\equiv R(x)$ is the solution to the fractional diffusion equation
\be\lb{FracDiff}
R(x)+\ka\la\om\cdot\Om\ra\tfrac{|\bB^{d+1}|}{|\bB^d|}(-\Dlt_x)^{1/2}R(x)=S(x)\,,\qquad x\in\bT^d\,.
\ee
\end{Thm}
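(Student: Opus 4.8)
The plan is to make the formal Hilbert expansion rigorous via the moment method, establishing the three parts in the order (a), then the a priori estimates, then (b), and finally (c) as a consequence of (b) through the harmonic-extension characterization of $\sqrt{-\Dlt_x}$.

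\textbf{Part (a).} First I would solve \eqref{RTPb} for fixed $\si$ by characteristics. Writing $\cL=\rI-\cK$, the equation becomes $\om\cdot\grad_zf_\si+\si f_\si=\si\cK f_\si$; integrating along the lines $s\mapsto z-s\om$ (which exit through $y=0$ when $\om_y>0$ and run to $y=+\infty$ when $\om_y<0$) turns \eqref{RTPb} into a fixed-point equation $f_\si=\Phi_\si(f_\si)$ on $L^\infty$. Since $\cK$ is an averaging (Markov) operator with $\cK1=1$, and since the boundary operator in \eqref{RTPb} is, by the identity $\int_{\bS^d}\om_y^\pm\dd\om=|\bB^d|$, a genuine convex combination of the source $S$ and the incoming average $\lA f_\si\rA_-$, the map $\Phi_\si$ is order-preserving and $\|S\|_{L^\infty}$ is a supersolution; a standard maximum-principle argument then gives existence, uniqueness and the bound $\|f_\si\|_{L^\infty}\le\|S\|_{L^\infty}$. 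The derivative bound is then free: by translation invariance in $x$, $\d_{x_j}f_\si$ solves the same problem with $S$ replaced by $\d_{x_j}S$, so the same maximum principle applies.

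\textbf{A priori estimates.} The algebraic engine is the coercivity of $\cL$: by Lemmas \ref{L-KerL} and \ref{L-ImL} and compactness of $\cK$, $\cL$ has a spectral gap, $\int_{\bS^d}\phi\cL\phi\,\dd\om\ge\l_1\|\phi-\la\phi\ra\|_{L^2(\bS^d)}^2$, and $\cL$ is boundedly invertible on $\bC^\perp$. Writing $\rho_\si:=\la f_\si\ra$ and $g_\si:=\si(f_\si-\rho_\si)$, integration of \eqref{RTPb} over $\bS^d$ gives the exact conservation law $\Div_z\la\om f_\si\ra=0$, while the equation itself reads $\cL g_\si=-\om\cdot\grad_zf_\si$. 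Two bounds are needed: a uniform Dirichlet bound $\grad_z\rho_\si\in L^2(Z)$ and a uniform $L^2$ bound on $g_\si$. The first — the source of the integrability of the limit — I would get by testing $\Div_z\la\om f_\si\ra=0$ against $\rho_\si$ and using the boundary condition in the moment form $\si\la\om_yf_\si\ra\rstr_{y=0}=\tfrac{|\bB^d|}{\ka|\bS^d|}(S-f_\si\rstr_{y=0,\,\om_y>0})$; multiplying by $\si$ this yields, to leading order, an identity of the schematic form $\tfrac{\la\om\cdot\Om\ra}{d+1}\|\grad_z\rho_\si\|_{L^2}^2+c\,\|f_\si\rstr_{y=0}\|_{L^2}^2\lesssim\|S\|_{L^2}\,\|f_\si\rstr_{y=0}\|_{L^2}$ with $c=\tfrac{|\bB^d|}{\ka|\bS^d|}$, hence uniform control of both $\grad_z\rho_\si$ and the trace. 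The $L^2$ bound on $g_\si$ then follows from $g_\si=-\cL^{-1}(\om\cdot\grad_zf_\si)$, the bounded invertibility of $\cL$ on $\bC^\perp$, the Dirichlet bound, and the $L^\infty$ control of $\grad_xf_\si$ from part (a), which shows that the self-consistent correction $\tfrac1\si\grad_zg_\si$ is of lower order.

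\textbf{Part (b).} With these bounds I extract, along a subsequence, $f_\si\wtost\rho$ in $L^\infty$ and $g_\si\wto G$ in $L^2$. Passing to the limit in $\cL g_\si=-\om\cdot\grad_zf_\si$ and using $f_\si\to\rho$, the defining relation $\cL\Om=\om$ from \eqref{DefOm}, $\la G\ra=0$, and injectivity of $\cL$ on $\bC^\perp$ identifies $G=-\Om\cdot\grad_z\rho$; in particular $\rho$ is $\om$-independent. The scaled current satisfies $\si\la\om f_\si\ra=\la\om g_\si\ra\wto-\la\om\otimes\Om\ra\grad_z\rho=-\tfrac{\la\om\cdot\Om\ra}{d+1}\grad_z\rho$ by \eqref{H3p}, and since $\Div_z\la\om g_\si\ra=0$ for every $\si$, the limit gives $-\tfrac{\la\om\cdot\Om\ra}{d+1}\Dlt_z\rho=0$, i.e. the bulk equation of \eqref{Diff}. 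Because $\la\om g_\si\ra$ is bounded in $L^2(Z)$ and divergence-free, its normal trace is controlled in $H^{-1/2}(\bT^d)$ and converges to $-\tfrac{\la\om\cdot\Om\ra}{d+1}\d_y\rho\rstr_{y=0}$; feeding this into the moment form of \eqref{BCC} and using $\tfrac{|\bS^d|}{(d+1)|\bB^d|}=\tfrac{|\bB^{d+1}|}{|\bB^d|}$ yields exactly the Robin condition of \eqref{Diff}. Uniqueness for \eqref{Diff} promotes subsequential to full convergence.

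\textbf{Part (c) and the main obstacle.} Part (c) is then a reformulation: $\rho$ is the bounded harmonic extension to $Z$ of its trace $R:=\rho\rstr_{y=0}$, so the harmonic-extension formula of Section \ref{SS-HarmExt} with $\g=1$ (for which $c_1=1$ and the weight $y^{2-2/\g}\equiv1$, so the extension is genuinely harmonic) gives $-\d_y\rho\rstr_{y=0}=(-\Dlt_x)^{1/2}R$; substituting into the Robin condition produces \eqref{FracDiff}, while the two stated trace convergences are precisely the $H^{-1/2}$ normal-trace convergence of $\la\om g_\si\ra$ and the weak trace convergence of $f_\si\rstr_{y=0}$. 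I expect the main obstacle to be exactly this passage to the limit at $y=0$: traces do not in general commute with weak limits, and in kinetic diffusion limits the boundary is where Milne-type boundary layers usually appear. The saving structure here is twofold — the divergence-free current furnishes a genuine $H^{-1/2}$ normal trace stable under weak convergence, and the boundary condition \eqref{RTPb} forces the outgoing trace to be isotropic, so no angular boundary-layer profile must be resolved; the scaling $1-\a\sim C/\si$ is precisely what balances the $O(1/\si)$ net flux against the near-perfect reflection so as to leave the finite coefficient $\ka\la\om\cdot\Om\ra\tfrac{|\bB^{d+1}|}{|\bB^d|}$ in the limit. Verifying that the weak trace of $f_\si$ is genuinely isotropic and equals $R$ — equivalently that the boundary layer contributes nothing to the relevant moments — is the step I would expect to require the most care.
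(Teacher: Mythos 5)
Your overall architecture (maximum principle for (a), coercivity of $\cL$ plus the moment method for (b), harmonic extension for (c)) matches the paper's, and your discussion of part (c) --- the divergence-free current giving a weakly stable $H^{-1/2}$ normal trace, the forced isotropy of the outgoing trace, the identity $\tfrac{|\bS^d|}{(d+1)|\bB^d|}=\tfrac{|\bB^{d+1}|}{|\bB^d|}$ --- is essentially the paper's Step 7. But there is a genuine gap at the heart of your argument, in the a priori estimates. Your derivation of the uniform $L^2$ bound on $g_\si:=\si(f_\si-\la f_\si\ra)$ is circular: you write $g_\si=-\cL^{-1}(\om\cdot\grad_zf_\si)$ and invoke bounded invertibility of $\cL$ on $\bC^\perp$, but $\om\cdot\grad_zf_\si=\om\cdot\grad_z\rho_\si+\tfrac1\si\,\om\cdot\grad_zg_\si$ contains $\d_yg_\si$, for which no bound is available: the maximum principle controls only $x$-derivatives (the problem is translation-invariant in $x$ but not in $y$), and near $y=0$ the $y$-derivatives can be of order $\si$. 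The claim that $\tfrac1\si\grad_zg_\si$ ``is of lower order'' is precisely what cannot be assumed. Likewise, your Dirichlet bound on $\grad_z\rho_\si$, obtained by testing $\Div_z\la\om f_\si\ra=0$ against $\rho_\si$, only produces $\|\grad\rho_\si\|_{L^2}^2$ on the left-hand side if one already knows the Fick law $\si\la\om f_\si\ra\approx-\tfrac{\la\om\cdot\Om\ra}{d+1}\grad\rho_\si$, i.e.\ the very bound on $g_\si$ you are trying to prove. The two estimates feed on each other and neither is established independently.

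The paper breaks this circle with a device absent from your proposal: it lifts the boundary data, setting $h_\si:=f_\si-F_\si(x)\chi(y)$ with $\chi(y)=(1-y)_+^2$ and $F_\si$ the (isotropic) value of $f_\si$ on the incoming set, so that $h_\si$ has \emph{zero} incoming data and solves the equation with source $-\om\cdot\grad(F_\si\chi)$. Multiplying by $2\si h_\si$, integrating, and using Lemma \ref{L-CoercL}, the boundary term has a favorable sign (since $h_\si\rstr_{y=0}=0$ for $\om_y>0$, one has $\la\om_yh_\si^2\ra(x,0)\le0$), and the source is controlled in $L^2$ \emph{uniformly in $\si$} because $\|F_\si\|_{W^{1,\infty}}\le\|S\|_{W^{1,\infty}}$ by the maximum principle --- this is where the hypothesis $S\in W^{1,\infty}$ is actually used. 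Cauchy--Schwarz then absorbs the source into the dissipation and yields $\si^2\mu\,\|f_\si-\la f_\si\ra\|_{L^2(Z\times\bS^d)}^2\le\|\grad(F_\si\chi)\|_{L^2}^2$ directly; the integrability $\grad\rho\in L^2(Z)$ is deduced \emph{a posteriori} from the weak limit $J=-\Om\cdot\grad\rho$, not assumed beforehand. Two smaller points: (i) your existence/uniqueness sketch for (a) glosses over the fact that the characteristic fixed-point map is conservative (Lipschitz constant exactly $1$ for the inner iteration); the paper regularizes with an absorption term $\l f$, passes $\l\to0$, and proves uniqueness separately by a Fourier-energy argument in $x$ exploiting the monotonicity of $y\mapsto\la\om_y|\hat g|^2\ra$; (ii) the paper derives the weak formulation of \eqref{Diff} by testing the kinetic equation with $\si_n\phi+\Om\cdot\grad\phi$, which is where the self-adjointness identity $\la\si_n\Om\,\cL f_{\si_n}\ra=\si_n\la\om f_{\si_n}\ra$ enters. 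Your parts (b)--(c) would go through essentially as written once the correct a priori bound is in place.
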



\section{Proof of Theorem \ref{T-Ppal}}


\subsection{Step 1}


Consider the boundary value problem
$$
\left\{
\ba
{}&\l h+\om\cdot\grad_zh+\si h=\si Q(z,\om)\,,
\\
&h(x,0,\om)=H(x)\,,\quad\om_y>0\,,
\ea
\right.
$$
where $Q\in L^\infty(Z\times\bS^d)$ and $H\in L^\infty(\bT^d)$. 

Its unique solution is given by the method of characteristics
$$
\ba
h(z,\om)=&H(x-\tfrac{y}{\om_y}\om_x)e^{-(\l+\si)y/\om_y}&&
\\
&+\int_0^{y/\om_y}\si e^{-(\l+\si)t}Q(z-t\om,\om)\dd t\,,&&\qquad\om_y>0\,,
\\
h(z,\om)=&\int_0^\infty\si e^{-(\l+\si)t}Q(z-t\om,\om)\dd t\,,&&\qquad\om_y<0\,.
\ea
$$
Setting
$$
\th:=e^{-(\l+\si)y/\om_y}
$$
we find that, for a.e. $(z,\om)\in Z\times\bS^d$,
$$
\ba
|h(z,\om)|\le\max(\th\|H\|_{L^\infty}+\tfrac{\si}{\si+\l}(1-\th)\|Q\|_{L^\infty},\tfrac{\si}{\si+\l}\|Q\|_{L^\infty})
\\
=\max(\tfrac{\l\th}{\si+\l}\|H\|_{L^\infty}+\tfrac{\si}{\si+\l}((1-\th)\|Q\|_{L^\infty}+\th\|H\|_{L^\infty}),\tfrac{\si}{\si+\l}\|Q\|_{L^\infty})
\\
\le\max(\tfrac{\l}{\si+\l}\|H\|_{L^\infty}+\tfrac{\si}{\si+\l}\max(\|Q\|_{L^\infty},\|H\|_{L^\infty}),\tfrac{\si}{\si+\l}\|Q\|_{L^\infty})&\,.
\ea
$$

This inequality implies, on the one hand, that, for each $H\in L^\infty(\bT^d)$, the map
$$
Q\mapsto\cK h
$$
is a contraction in $L^\infty(Z\times\bS^d)$ with Lipschitz constant $\le\tfrac{\si}{\l+\si}$. Indeed, denoting by $h_1$ and $h_2$ corresponding to $Q_1$ and $Q_2$ respectively, one has
$$
|(h_1-h_2)(z,\om)|\le\tfrac{\si}{\si+\l}\|Q_1-Q_2\|_{L^\infty}\,,
$$
as a consequence of the previous inequality. Indeed, by linearity $h_1-h_2$ is a solution of the boundary value problem above with source term $Q_1-Q_2$ and boundary data $H=0$. 

Using assumption \eqref{H1p}, we see that the operator $\cK$ defined in \eqref{DefK} is a bounded operator on $L^\infty(\bS^d)$, satisfying
$$
\|\cK\|_{\cL(L^\infty(\bS^d))}\le 1\,.
$$
Hence
$$
\|\cK h_1-\cK h_2\|_{L^\infty}\le\|h_1-h_2\|_{L^\infty}\le\tfrac{\si}{\si+\l}\|Q_1-Q_2\|_{L^\infty}\,.
$$
Therefore, the map $Q\mapsto\cK h$ has a unique fixed point in $L^\infty(Z\times\bS^d)$. 

In other words, there exists a unique solution $g\in L^\infty(Z\times\bS^d)$ to the boundary value problem
\be\lb{Pblg}
\left\{
\ba
{}&\l g+\om\cdot\grad_zg+\si\cL g=0\,,&&\quad(z,\om)\in Z\times\bS^d\,,
\\
&g(x,0,\om)=H(x)\,,&&\quad\om_y>0\,,
\ea
\right.
\ee
and this solution satisfies the bound
$$
\|g\|_{L^\infty}\le\max(\tfrac{\l}{\si+\l}\|H\|_{L^\infty}+\tfrac{\si}{\si+\l}\max(\|\cK g\|_{L^\infty},\|H\|_{L^\infty}),\tfrac{\si}{\si+\l}\|\cK g\|_{L^\infty})\,.
$$

It is obviously impossible that
$$
\|\cK g\|_{L^\infty}>\|H\|_{L^\infty}\,.
$$
Indeed, since
$$
\|\cK g\|_{L^\infty}\le\|g\|_{L^\infty}\,,
$$
the inequality above would imply that
$$
\|\cK g\|_{L^\infty}\le\tfrac{\l}{\si+\l}\|H\|_{L^\infty}+\tfrac{\si}{\si+\l}\|\cK g\|_{L^\infty}\,,
$$
which would imply in turn
$$
\|\cK g\|_{L^\infty}\le\|H\|_{L^\infty}\,,
$$
in contradiction with our assumption. Hence
$$
\|\cK g\|_{L^\infty}\le\|H\|_{L^\infty}\,,
$$
and therefore
$$
\ba
\|g\|_{L^\infty}\le\max(\tfrac{\l}{\si+\l}\|H\|_{L^\infty}+\tfrac{\si}{\si+\l}\max(\|\cK g\|_{L^\infty},\|H\|_{L^\infty}),\tfrac{\si}{\si+\l}\|\cK g\|_{L^\infty})
\\
=\max(\tfrac{\l}{\si+\l}\|H\|_{L^\infty}+\tfrac{\si}{\si+\l}\|H\|_{L^\infty},\tfrac{\si}{\si+\l}\|\cK g\|_{L^\infty})=\|H\|_{L^\infty}&\,.
\ea
$$


\subsection{Step 2}


With step 1, for each $\l>0$ and $\si>0$, we have constructed a linear map
$$
\cT_{\l,\si}:\, L^\infty(\bT^d)\mapsto L^\infty(Z\times\bS^d)
$$
defined by the formula
$$
\cT_{\l,\si}H=g\,,
$$
where $g$ is the solution to the boundary value problem \eqref{Pblg}. We have also proved that
$$
\|\cT_{\l,\si}H\|_{L^\infty}\le\|H\|_{L^\infty}\,.
$$

Now we seek to solve the boundary value problem
\be\lb{Pblf}
\left\{
\ba
{}&\l f+\om\cdot\grad_zf+\si\cL f=0\,,&&\quad(z,\om)\in Z\times\bS^d\,,
\\
&f(x,0,\om)=\frac1{1+\b}S(x)+\frac{\b}{1+\b}\lA f(x,0,\cdot)\rA_-\,,&&\quad\om_y>0\,,
\ea
\right.
\ee

Consider the map
$$
\cA_{\l,\si,\b}:\,L^\infty(\bT^d)\to L^\infty(\bT^d)
$$
defined by the formula
$$
\cA_{\l,\si,\b}F(x):=\si\LA\int_0^\infty e^{-(\l+\si)t}\cK\left(\cT_{\l,\si}\left(\frac{S}{1+\b}+\frac{\b F}{1+\b}\right)\right)(x-t\om,\om)\dd t\RA_-\,.
$$
In terms of the operator $\cK$ defined in \eqref{DefK}, one has obviously
$$
(\cA_{\l,\si,\b}F_1-\cA_{\l,\si,\b}F_2)(x)=\frac{\b\si}{1+\b}\LA\int_0^\infty e^{-(\l+\si)t}\cK(\cT_{\l,\si}(F_1-F_2))(x-t\om,\om)\dd t\RA_-\,,
$$
so that, for a.e. $x\in\bT^d$,
$$
\ba
|(\cA_{\l,\si,\b}F_1-\cA_{\l,\si,\b}F_2)(x)|\le\frac{\b\si}{1+\b}\|\cT_{\l,\si}(F_1-F_2)\|_{L^\infty}\int_0^\infty e^{-(\l+\si)t}\dd t
\\
\le\frac{\b\si}{1+\b}\|F_1-F_2\|_{L^\infty}\int_0^\infty e^{-(\l+\si)t}\dd t
\\
=\frac{\b}{1+\b}\frac{\si}{\l+\si}\|F_1-F_2\|_{L^\infty}&\,.
\ea
$$
In other words, $\cA_{\l,\si,\b}$ is a contraction in the Banach space $L^\infty(\bT^d)$ with Lipschitz constant $\le\frac{\b}{1+\b}\frac{\si}{\l+\si}<1$.

By the fixed point theorem, there exists a unique $F\in L^\infty(\bT^d)$ such that
$$
\cA_{\l,\si,\b}F=F\,.
$$
The solution to the boundary value problem \eqref{Pblf} is given by the formula
$$
f:=\cT_{\l,\si}\left(\frac{S}{1+\b}+\frac{\b F}{1+\b}\right)\,.
$$

The fixed point $F$ satisfies in particular the bound
$$
\|F-\cA_{\l,\si,\b}0\|_{L^\infty}\le\frac{\b}{1+\b}\frac{\si}{\l+\si}\|F\|_{L^\infty}\,.
$$
Since
$$
|\cA_{\l,\si,\b}0(x)|=\si\LA\int_0^\infty e^{-(\l+\si)t}\cK\left(\cT_{\l,\si}\frac{S}{1+\b}\right)(x-t\om,\om)\dd t\RA_-\,,
$$
one has
$$
\|\cA_{\l,\si,\b}0\|_{L^\infty}\le\frac{\si}{1+\b}\|S\|_{L^\infty}\int_0^\infty e^{-(\l+\si)t}\dd t=\frac1{1+\b}\frac{\si}{\l+\si}\|S\|_{L^\infty}\,,
$$
so that
$$
\|F\|_{L^\infty}\le\frac1{1+\b}\frac{\si}{\l+\si}\|S\|_{L^\infty}+\frac{\b}{1+\b}\frac{\si}{\l+\si}\|F\|_{L^\infty}\,.
$$

Therefore
$$
\|F\|_{L^\infty}\le\frac{\si}{\l(1+\b)+\si}\|S\|_{L^\infty}\,,
$$
which implies in turn
$$
\ba
\|f\|_{L^\infty}&=\left\|\cT_{\l,\si}\left(\frac{S}{1+\b}+\frac{\b F}{1+\b}\right)\right\|_{L^\infty}
\\
&\le\left(\frac1{1+\b}+\frac{\b}{1+\b}\frac{\si}{\l(1+\b)+\si}\right)\|S\|_{L^\infty}\le\|S\|_{L^\infty}\,.
\ea
$$


\subsection{Step 3}


Call $f_\l$ the unique solution of the boundary value problem \eqref{Pblf} obtained in step 2, for which we have obtained the bound
\be\lb{Boundfl}
\|f_\l\|_{L^\infty}\le\|S\|_{L^\infty}\,,\qquad\l>0\,,
\ee
which is uniform in $\l>0$.

Now we consider the boundary value problem
\be\lb{Pbf}
\left\{
\ba
{}&\om\cdot\grad_zf+\si\cL f=0\,,&&\quad(z,\om)\in Z\times\bS^d\,,
\\
&f(x,0,\om)=\frac1{1+\b}S(x)+\frac{\b}{1+\b}\lA f(x,0,\cdot)\rA_-\,,&&\quad\om_y>0\,,
\ea
\right.
\ee
of which we seek a solution by passing to the limit in $f_\l$ as $\l\to 0$.

On account of the uniform bound \eqref{Boundfl}, the Banach-Alaoglu theorem implies the existence of a sequence $\l_n\to 0$ such that
$$
f_{\l_n}\to f\quad\hbox{ in }L^\infty(Z\times\bS^d)\hbox{ weak}-*\,.
$$
One has obviously
$$
\l_n f_{\l_n}+\om\cdot\grad_zf_{\l_n}+\si\cL f_{\l_n}\to\om\cdot\grad_zf+\si\cL f
$$
in $\cD'(\bT^d\times(0,+\infty)\times\bS^d)$, so that
$$
\om\cdot\grad_zf+\si\cL f=0\quad\hbox{ dans }\cD'(\bT^d\times(0,+\infty)\times\bS^d)\,.
$$

For each $\phi\in W^{1,1}(\bT^d)$, one has
$$
\ba
\d_y\left(\om_y\int_{\bT^d}f_{\l_n}(x,y,\om)\phi(x)\dd x\right)=&-\l_n\int_{\bT^d}f_{\l_n}(x,y,\om)\dd x
\\
&+\int_{\bT^d}\om_x\cdot\grad\phi(x)f_{\l_n}(x,y,\om)\dd x
\\
&-\si\int_{\bT^d}\cL f_{\l_n}(x,y,\om)\phi(x)\dd x\,.
\ea
$$
Therefore
$$
\int_{\bT^d}f_{\l_n}(x,y,\om)\phi(x)\dd x\quad\hbox{ et }\quad\d_y\left(\om_y\int_{\bT^d}f_{\l_n}(x,y,\om)\phi(x)\dd x\right)
$$
are bounded in $L^\infty((0,+\infty)\times\bS^d)$. By Ascoli-Arzel\`a's theorem,
$$
\om_yf_{\l_n}(x,0,\om)\to\om_yf(x,0,\om)\quad\hbox{ in }L^\infty(\bS^d;W^{-1,\infty}(\bT^d))\hbox{ weak}-*\,,
$$
and
$$
\lA f_{\l_n}\rA_-(\cdot,0)\to\lA f\rA_-(\cdot,0)\quad\hbox{ dans }W^{-1,\infty}(\bT^d)\hbox{ weak}-*\,.
$$

By passing to the limit in the boundary condition 
$$
f_{\l_n}(x,0,\om)=\frac1{1+\b}S(x)+\frac{\b}{1+\b}\lA f_{\l_n}(x,0,\cdot)\rA_-\,,\quad\om_y>0\,,\,\,\,x\in\bT^d\,,
$$
we conclude that
$$
\om_y^+f\rstr_{y=0}=\frac{\om_y^+S}{1+\b}+\frac{\b\om_y^+}{1+\b}\lA f\rA_-\rstr_{y=0}\quad\hbox{ in }L^\infty(\bS^d;W^{-1,\infty}(\bT^d))\,.
$$
In particular
$$
f(\cdot,0,\om)=\frac{S}{1+\b}+\frac{\b}{1+\b}\lA f\rA_-(\cdot,0)\quad\hbox{ in }W^{-1,\infty}(\bT^d))
$$
for a.e. $\om\in\bS^d$ such that $\om_y>0$.

The uniform bound \eqref{Boundfl} obviously implies that the solution to the boundary value problem \eqref{Pbf} satisfies the bound
$$
\|f\|_{L^\infty}\le\|S\|_{L^\infty}\,.
$$


\subsection{Step 4}


In this step, we check that, for each $S\in L^\infty(\bT^d)$, there exists at most one weak solution $f$ of the boundary value problem
$$
\left\{
\ba
{}&\om\cdot\grad_zf+\si\cL f=0\,,&&\quad(z,\om)\in Z\times\bS^d\,,
\\
&f(x,0,\om)=\frac1{1+\b}S(x)+\frac{\b}{1+\b}\lA f(x,0,\cdot)\rA_-\,,&&\quad\om_y>0\,,
\ea
\right.
$$
in the space $L^\infty(\bT^d\times(0,+\infty)\times\bS^d)$.

By linearity, it is enough to show that, if $g\in L^\infty(\bT^d\times(0,+\infty)\times\bS^d)$ is a solution to the boundary value problem
$$
\left\{
\ba
{}&\om\cdot\grad_zg+\si\cL g=0\,,&&\quad(z,\om)\in Z\times\bS^d\,,
\\
&g(x,0,\om)=\frac{\b}{1+\b}\lA g(x,0,\cdot)\rA_-\,,&&\quad\om_y>0\,,
\ea
\right.
$$
then $g=0$, a.e.. 

Denote the sequence of Fourier coefficients of $g$ in the $y$ variable by
$$
\hat g(k,y,\om)=\int_{\bT^d}e^{-2\pi ik\cdot x}g(x,y,\om)\dd x\,,\quad k\in\bZ^d\,.
$$
Then, for each $k\in\bZ^d$, on a
$$
\left\{
\ba
{}&\om_y\d_y\hat g(k,y,\om)+i2\pi k\cdot\om_x\hat g(k,y,\om)+\si\cL g(k,y,\om)=0\,,&&\quad y>0\,,\,\,|\om|=1\,,
\\
&\hat g(k,0,\om)=\frac{\b}{1+\b}\lA g\rA_-(k,0)\,,\quad\om_y>0\,.
\ea
\right.
$$
For each $k$ and for a.e. $\om\in\bS^d$, the function $y\mapsto g(k,y,\om)$ belongs therefore to $W^{1,\infty}(0,+\infty)$.

Multiplying both sides of the differential equation above by $\overline{\hat g(k,y,\om)}$ leads to the identity
$$
(\overline{\hat g}\om_y\d_y\hat g)(k,y,\om)+i2\pi k\cdot\om_x|\hat g(k,y,\om)|^2+\si\overline{\hat g(k,y,\om)}\cL\hat g(k,y,\om)=0\,.
$$
Taking the real part of the right hand side of the equality above, we arrive at the identty
$$
\ba
\om_y\d_y|\hat g(k,y,\om)|^2+2\si|\hat g(k,y,\om)-\la\hat g\ra(k,y)|^2
\\
=-2\si\Re\left(\overline{\hat g(k,y,\om)}\cL\hat g(k,y,\om)\right)&\,.
\ea
$$
Now we average both sides of this equality in $\om$: for each $k\in\bZ^d$, 
$$
\d_y\la\om_y|\hat g|^2\ra(k,y)+2\si\Re\la\overline{\hat g(k,y,\cdot)}\cL\hat g(k,y,\cdot)\ra=0\,,\quad y>0\,.
$$

At this point, we recall the following property of $\cL$. 

\begin{Lem}\lb{L-CoercL}
Under assumptions \eqref{H1p}-\eqref{H2p} on the function $p$, the operator $\cL$ satisfies the following properties. For each $\phi\in L^2(\bS^d;\bC)$, one has
$$
\la\overline\phi\cL\phi\ra=\tfrac1{2|\bS^d|}\iint_{\bS^d\times\bS^d}p(\om,\om')|\phi(\om)-\phi(\om')|^2\dd\om\dd\om'\,.
$$
Moreover, there exists $\mu>0$ such that
$$
\la\overline\phi\cL\phi\ra\ge\mu\|\phi-\la\phi\ra\|_{L^2(\bS^d)}^2\,.
$$
\end{Lem}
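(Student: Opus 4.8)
The plan is to prove the two assertions in turn: the quadratic-form identity by a polarization argument, and the coercivity bound by exhibiting a spectral gap for $\cK=\rI-\cL$.

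For the identity, I would repeat verbatim the symmetrization already performed in \eqref{phiLpsi}, now retaining complex conjugates. Starting from $\la\overline\phi\cL\phi\ra=\tfrac1{|\bS^d|}\int_{\bS^d}\overline{\phi(\om)}(\cL\phi)(\om)\dd\om$, I would expand $\cL\phi(\om)=\int_{\bS^d}p(\om,\om')(\phi(\om)-\phi(\om'))\dd\om'$, use the symmetry $p(\om,\om')=p(\om',\om)$ from \eqref{H1p} to produce a second expression with $\om$ and $\om'$ interchanged, and average the two. The integrand recombines as $(\overline{\phi(\om)}-\overline{\phi(\om')})(\phi(\om)-\phi(\om'))=|\phi(\om)-\phi(\om')|^2$, which is exactly the claimed formula; in particular it shows $\la\overline\phi\cL\phi\ra\ge 0$. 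This step is routine.

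For the coercivity estimate, I would pass to the spectral theory of $\cK$. By Lemma \ref{L-ImL}, $\cK$ is compact and self-adjoint on $L^2(\bS^d;\bC)$; moreover $p\ge 0$ together with $\int_{\bS^d}p(\om,\om')\dd\om'=1$ make $p(\om,\cdot)$ a probability density, so Jensen's (or the Cauchy-Schwarz) inequality followed by Fubini and the symmetry of $p$ gives $\|\cK\|\le 1$, whence the spectrum of $\cK$ lies in $[-1,1]$. The normalization in \eqref{H1p} yields $\cK 1=1$, so $1$ is an eigenvalue, and by Lemma \ref{L-KerL} its eigenspace is exactly the constants $\bC=\Ker\cL$. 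I would then split $\phi=\la\phi\ra+\psi$ with $\psi:=\phi-\la\phi\ra\in\bC^\perp=\IM\cL$; since $\cL$ annihilates constants and maps into $\bC^\perp$ (so $\int_{\bS^d}\cL\psi\,\dd\om=0$), one has $\la\overline\phi\cL\phi\ra=\tfrac1{|\bS^d|}\int_{\bS^d}\overline{\psi}\,\cL\psi\,\dd\om$, reducing matters to a lower bound for $\int_{\bS^d}\overline\psi\,(\rI-\cK)\psi\,\dd\om$ on $\bC^\perp$.

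The crux, and the only genuine obstacle, is that the supremum $\l_*$ of the spectrum of the restriction of $\cK$ to $\bC^\perp$ is strictly less than $1$. Here compactness is indispensable: the nonzero spectrum of $\cK$ consists of eigenvalues of finite multiplicity accumulating only at $0$, hence it cannot accumulate at the value $1$; since the eigenspace at $1$ is precisely $\bC$, the restriction to $\bC^\perp$ has spectrum contained in $[-1,\l_*]$ with $\l_*<1$. Setting $\mu:=(1-\l_*)/|\bS^d|>0$, the spectral theorem gives $\int_{\bS^d}\overline\psi\,(\rI-\cK)\psi\,\dd\om\ge(1-\l_*)\|\psi\|_{L^2(\bS^d)}^2$, and therefore $\la\overline\phi\cL\phi\ra\ge\mu\|\phi-\la\phi\ra\|_{L^2(\bS^d)}^2$. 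Everything beyond the non-accumulation of eigenvalues at $1$ is bookkeeping with the normalizing factor $|\bS^d|$.
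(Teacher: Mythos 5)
Your proof is correct, and the first half (the symmetrization giving the quadratic-form identity) is exactly what the paper does, by reusing \eqref{phiLpsi} with $\bar\phi$ in place of $\phi$. For the coercivity estimate, however, you take a genuinely different, more spectral route. The paper argues softly: by Lemma \ref{L-ImL} the map $\cL\rstr_{(\Ker\cL)^\perp}:(\Ker\cL)^\perp\to(\Ker\cL)^\perp$ is a continuous bijection (Fredholm alternative), hence bicontinuous by the open mapping theorem, which yields $\|\cL\psi\|\ge c\|\psi\|$ on $(\Ker\cL)^\perp$; converting this operator-norm lower bound into the quadratic-form bound $\la\overline\psi\cL\psi\ra\ge\mu\|\psi\|^2$ still requires the nonnegativity and self-adjointness of $\cL$ (e.g.\ via $\la\overline\psi\cL\psi\ra=\|\cL^{1/2}\psi\|^2$ and $\|\cL\psi\|\le\|\cL^{1/2}\|\,\|\cL^{1/2}\psi\|$), a step the paper leaves implicit. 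You instead work with $\cK=\rI-\cL$ directly: compactness and self-adjointness force the nonzero spectrum of $\cK$ to consist of isolated eigenvalues of finite multiplicity accumulating only at $0$, the eigenspace at $1$ is exactly $\bC=\Ker\cL$, and since $\cK$ preserves $\bC^\perp$ its restriction there has spectrum bounded above by some $\l_*<1$, giving $\cL\ge(1-\l_*)\rI$ on $\bC^\perp$ at the level of quadratic forms with no further conversion needed. Both arguments rest on the same two inputs (compactness of $\cK$ from \eqref{H2p} and the identification of $\Ker\cL$ from Lemma \ref{L-KerL}); yours is slightly longer but more self-contained and delivers the form bound directly, while the paper's is shorter because it delegates everything to Lemma \ref{L-ImL} and the open mapping theorem. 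Your reduction from $\phi$ to $\psi=\phi-\la\phi\ra$ and the verification that $\l_*=1$ is impossible are both sound.
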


\smallskip
The proof of this lemma is deferred until the end of the present step.

\smallskip
Thus, for each $k\in\bZ^d$, the function
$$
y\mapsto\la\om_y|\hat g|^2\ra(k,y)
$$
is bounded and nonincreasing on $(0,+\infty)$, and applying Lemma \ref{L-CoercL} shows that
$$
2\si\mu\int_0^\infty\La\left|\hat g-\la\hat g\ra\right|^2\Ra(k,y)\dd y\le\lim_{Y\to+\infty}\left[\la\om_y|\hat g|^2\ra(k,\cdot)\right]_{y=Y}^{y=0}<+\infty\,.
$$

Next observe that
\be\lb{DecompEntroFlux}
\la\om_y|\hat g|^2\ra(k,y)=\la\om_y|\hat g-\la\hat g\ra|^2\ra(k,y)+2\Re\left(\la\om_y(\hat g-\la\hat g\ra)\ra(k,y)\overline{\la\hat g\ra(k,y)}\right)\,.
\ee
In view of the previous estimate, the first term on the right hand side of this equality belongs to $L^1(0,+\infty)$ since $|\om_y|\le 1$, while the second term is bounded by the Cauchy-Schwarz inequality:
$$
2\Re\left(\la\om_y(\hat g-\la\hat g\ra)\ra(k,y)\overline{\la\hat g\ra(k,y)}\right)\le\|g\|_{L^\infty}\la\om_y^2\ra^{1/2}\la|g-\la g\ra|^2\ra(k,y)^{1/2}\,.
$$
Hence the second term on the right hand side of \eqref{DecompEntroFlux} belongs to $L^2(0,+\infty)$, and since the first term is also bounded in $L^\infty(0,+\infty)$ because $g$ is assumed to belong to $L^\infty(Z\times\bS^d)$,
we conclude that
$$
y\mapsto\la\om_y|\hat g|^2\ra(k,y)\hbox{  belongs to }L^2(0,+\infty)
$$ 
for each $k\in\bZ^d$. Since this function is moreover nonincreasing, we conclude that
$$
0=\lim_{Y\to+\infty}\la\om_y|\hat g|^2\ra(k,Y)\le\la\om_y|\hat g|^2\ra(k,y)\quad\hbox{ for all }y>0\hbox{  and all }k\in\bZ^d\,.
$$

Thus, for each $Y>0$, one has
$$
\left\{
\ba
{}&\la\om_y|\hat g|^2\ra(k,Y)+2\si\mu\int_0^Y\La\left|\hat g-\la\hat g\ra\right|^2\Ra(k,y)\dd y\le\la\om_y|\hat g|^2\ra(k,0)\,,
\\
&\la\om_y|\hat g|^2\ra(k,Y)\ge 0\,.
\ea
\right.
$$
Moreover, the right hand side of this inequality satisfies
$$
\ba
\la\om_y|\hat g|^2\ra(k,0)=\tfrac{|\bB^d|}{|\bS^d|}\left(\lA|\hat g|^2\rA_+(k,0)-\lA|\hat g|^2\rA_-(k,0)\right)
\\
=\tfrac{|\bB^d|}{|\bS^d|}\left(\left(\tfrac{\b}{1+\b}\right)^2|\lA g\rA_-(k,0)|^2\lA1\rA_+-\lA|\hat g|^2\rA_-(k,0)\right)
\\
=\tfrac{|\bB^d|}{|\bS^d|}\left(\left(\tfrac{\b}{1+\b}\right)^2|\lA g\rA_-(k,0)|^2-\lA|\hat g|^2\rA_-(k,0)\right)
\\
\le\tfrac{|\bB^d|}{|\bS^d|}\left(\left(\tfrac{\b}{1+\b}\right)^2-1\right)\lA|\hat g|^2\rA_-(k,0)
\ea
$$
by the Cauchy-Schwarz inequality. 

Summarizing, we have proved that, for all $k\in\bZ^d$ and all $Y>0$
$$
0=\la\om_y|\hat g|^2\ra(k,Y)=\int_0^Y\La\left|\hat g-\la\hat g\ra\right|^2\Ra(k,y)\dd y=\lA|\hat g|^2\rA_-(k,0)\,.
$$
In particular, since $\cL 1=0$, one has
$$
\cL\hat g=\cL(\hat g-\la\hat g\ra)=0\,,
$$
so that
$$
\left\{
\ba
{}&\om_y\d_y\hat g(k,y,\om)+i2\pi k\cdot\om_x\hat g(k,y,\om)=g(k,y,\om)-\la\hat g\ra(k,y)=0&\,,
\\
&\hat g(k,0,\om)=\frac{\b}{1+\b}\lA g\rA_-(k,0)=0\,,\quad\,\,\om_y>0\,.
\ea
\right.
$$

The second equality above implies that, for a.e. $(k,y)\in\bZ^d\times(0,+\infty)$, the function $\om\mapsto\hat g(k,y,\om)$ is a.e. constant on $\bS^d$.  Hence
$$
\om_y\d_y\hat g(k,y)+i2\pi k\cdot\om_x\hat g(k,y)=0\,,\hbox{ for all }\om\in\bS^d\,,\hbox{ and a.e. }(k,y)\in\bZ\times(0,+\infty)\,.
$$
Setting $\om_x=k/|k|$ and $\om_y=0$ shows that
$$
k\not=0\Rightarrow\hat g(k,y)=0\hbox{ for a.e. }y\ge 0\,.
$$
On the other hand
$$
\d_y\hat g(0,y)=0
$$
by choosing $\om_x=0$ and $\om_y=1$, so that
$$
\hat g(0,y)=\hat g(0,0)\,.
$$
Finally, the boundary condition implies that
$$
\hat g(0,y)=\hat g(0,0)=0\,,\qquad y>0\,.
$$

Therefore, we have proved that $\hat g(k,y,\om)=0$ for a.e. $(y,\om)\in(0,+\infty)\times\bS^d$, for all $k\in\bZ^d$. Since the Fourier transform is one-to-one, this implies that $g=0$ in $L^\infty(Z\times\bS^d)$.

\smallskip
The discussion above shows the existence and uniqueness of the solution $f_\si\in L^\infty(Z\times\bS^d)$ to the boundary value problem \eqref{RTPb} in the case where $\b=\ka\si$; besides we have seen at the end of Step 3 that
\be\lb{Boundfsi}
\|f_\si\|_{L^\infty}\le\|S\|_{L^\infty}\,.
\ee
If moreover $S\in W^{1,\infty}(\bT^d)$, we may apply the results of Steps 1-4 to $\d_{x_j}S$; this shows that
$$
f_\si\in L^\infty((0,+\infty)\times\bS^d;W^{1,\infty}(\bT^d))\,,
$$
and satisfies the bound
\be\lb{BoundGradfsi}
\|\grad_xf_\si\|_{L^\infty}\le\|\grad_xS\|_{L^\infty}\,,\qquad\si>0\,.
\ee
Notice that this bound is uniform in $\si>0$. In other words, statement (a) in Theorem \ref{T-Ppal} is implied by steps 1-4 with $\b=\ka\si$.

\smallskip
\begin{proof}[Proof of Lemma \ref{L-CoercL}]
The formula for $\la\bar\phi\cL\phi\ra$ is \eqref{phiLpsi} where $\phi$ is replaced with $\bar\phi$ and $\psi$ with $\phi$. 

On the other hand, we already know from Lemma \ref{L-ImL} that $\cL$ is self-adjoint and of Fredholm type on $L^2(\bS^d;\bC^d)$. The Fredholm alternative implies that the continuous, one-to-one linear map
$$
\cL\rstr_{(\Ker\cL)^\perp}:\,(\Ker\cL)^\perp\to(\Ker\cL)^\perp
$$
is onto. By Banach's open mapping theorem (see Theorem 2.6 and Corollary 2.7 in \cite{Brezis}), this map is bicontinuous, which implies the existence of the positive constant $\mu$.
\end{proof}


\subsection{Step 5}


Set
$$
F_\si(x):=\frac{S(x)}{1+\si\nu}+\frac{\ka\si}{1+\ka\si}\lA f_\si\rA_-(x,0)\,;
$$
arguing as at the end of step 2, we find that
$$
\|F_\si\|_{L^\infty}\le\frac{\|S\|_{L^\infty}}{1+\ka\si}+\frac{\ka\si}{1+\ka\si}\lim_{\l\to 0^+}\|\cA_{\l,\si,\ka\si}F_\si\|_{L^\infty}\,,
$$
and then
$$
\|\cA_{\l,\si,\ka\si}F_\si\|_{L^\infty}\le\|\cA_{\l,\si,\ka\si}F_\si-\cA_{\l,\si,\ka\si}0\|_{L^\infty}+\|\cA_{\l,\si,\ka\si}0\|_{L^\infty}\le\frac{\ka\si}{1+\ka\si}\|F_\si\|_{L^\infty}\,,
$$
while
$$
\|\cA_{\l,\si,\ka\si}0\|_{L^\infty}\le\frac{\|S\|_{L^\infty}}{1+\ka\si}\,.
$$
Therefore
$$
\|F_\si\|_{L^\infty}\le\frac{\|S\|_{L^\infty}}{1+\ka\si}+\frac{\ka\si}{1+\ka\si}\left(\frac{\ka\si}{1+\ka\si}\|F_\si\|_{L^\infty}+\frac{\|S\|_{L^\infty}}{1+\ka\si}\right)
$$
and hence
$$
\left(1-\left(\frac{\ka\si}{1+\ka\si}\right)^2\right)\|F_\si\|_{L^\infty}\le\left(1+\frac{\ka\si}{1+\ka\si}\right)\frac{\|S\|_{L^\infty}}{1+\ka\si}\,.
$$
Since
$$
\left(1-\left(\frac{\ka\si}{1+\ka\si}\right)^2\right)=\left(1+\frac{\ka\si}{1+\ka\si}\right)\left(1-\frac{\ka\si}{1+\ka\si}\right)=\left(1+\frac{\ka\si}{1+\ka\si}\right)\frac1{1+\ka\si}
$$
the inequality above implies that
$$
\|F_\si\|_{L^\infty}\le\|S\|_{L^\infty}\,.
$$
By the same token
$$
\|\d_{x_j}F_\si\|_{L^\infty}\le\|\d_{x_j}S\|_{L^\infty}\,,\qquad j=1,\ldots,d\,.
$$

Let $\chi(y):=(1-y)_+^2$, so that $\chi\in C^1([0,\infty))$ has its support equal to $[0,1]$, and set
$$
g_\si(x,y):=F_\si(x)\chi(y)\,;
$$
define further the function
$$
h_\si(x,y,\om):=f_\si(x,y,\om)-g_\si(x,y)\,.
$$
It is easily seen that $h_\si$ is a solution to the boundary value problem
\be\lb{BVPh}
\left\{
\ba
{}&\om\cdot\grad_zh_\si+\si\cL h_\si=-\om\cdot\grad g_\si(z)\,,
\\
&h_\si(x,0,\om)=0\,,\quad\om_y>0\,.
\ea
\right.
\ee
Notice that $h_\si\in L^\infty((0,+\infty)\times\bS^d;W^{1,\infty}(\bT^d))$. 

Because of the regularity of $h_\si$ in $x$, we deduce from the equation that the function $z\mapsto h_\si(z,\om)$ belongs to $W^{1,\infty}(\bT^d\times(0,+\infty))$. Thus, we can multiply both sides of the first equation in \eqref{BVPh} 
by $2\si h_\si$ and integrate in $\om$, to obtain
$$
\si\Div_z\la\om h^2_\si\ra+2\si^2\la h_\si\cL h_\si\ra=-2\si\la\om h_\si\ra\cdot\grad g_\si(z)=-2\si\la\om(h_\si-\la h_\si\ra)\ra\cdot\grad g_\si(z)\,.
$$
After integrating in $x\in\bT^d$ both sides of this equality, we see that
$$
\ba
\si\d_y\int_{\bT^d}\la\om_yh^2_\si\ra(x,y)\dd x+2\si^2\int_{\bT^d}\la h_\si\cL h_\si\ra(x,y)\dd x
\\
=-2\si\int_{\bT^d}\la\om(h_\si-\la h_\si\ra)\ra\cdot\grad g_\si(x,y)\dd x\,.
\ea
$$

By construction
$$
f_\si(x,y,\om)=h_\si(x,y,\om)\quad\hbox{ whenever }y>1\,,
$$
so that
$$
\la\om_yh^2_\si\ra(x,y)=\la\om_yf^2_\si\ra(x,y)\hbox{ and }g_\si(x,y)=0\,,\quad\hbox{ for all }y>1\,.
$$
On the other hand, Parseval's theorem implies that
$$
\int_{\bT^d}\la\om_yf^2_\si\ra(x,y)\dd x=\La\om_y\int_{\bT^d}f^2_\si(x,y,\cdot)\dd x\Ra=\sum_{k\in\bZ^d}\la\om_y|\hat f_\si|^2\ra(k,y)\,.
$$
Proceeding as in step 4 and applying Lemma \ref{L-CoercL}, we see that
$$
\d_y\la\om_y|\hat f_\si|^2\ra(k,y)+2\si\mu\La\left|\hat f_\si-\la\hat f_\si\ra\right|^2\Ra(k,y)\le 0\,,\quad y>1\,,\,\,k\in\bZ^d\,.
$$
For all $k\in\bZ^d$, the function
$$
y\mapsto\la\om_y|\hat f_\si|^2\ra(k,y)
$$
is bounded and Lipschitz continuous on $(0,+\infty)$, and nonincreasing on $(1,+\infty)$. Hence
$$
\int_1^\infty\La\left|\hat f_\si-\la\hat f_\si\ra\right|^2\Ra(k,y)\dd y\le\frac1{2\si\mu}\left[\la\om_y|\hat f_\si|^2\ra(k,y)\right]^{y\,=\,1}_{y\to\infty}<\infty\,.
$$
With the decomposition
$$
\la\om_y|\hat f_\si|^2\ra(k,y)=\la\om_y|\hat f_\si-\la\hat f_\si\ra|^2\ra(k,y)+2\Re\left(\la\om_y(\hat f_\si-\la\hat f_\si\ra)\ra(k,y)\overline{\la\hat f_\si\ra(k,y)}\right)\,,
$$
we conclude that
$$
y\mapsto\la\om_y|\hat f_\si|^2\ra(k,y)\hbox{ belongs to }L^2(0,+\infty)
$$ 
for all $k\in\bZ^d$. (Notice that we do not seek a uniform in $\si$ estimate in $L^2$ for $\hat f_\si-\la\hat f_\si\ra$ at this stage in the argument, although this is our ultimate goal.) Since this function is nonincreasing on $(1,+\infty)$ 
for all $k\in\bZ^d$, we find that
$$
0=\lim_{Y\to+\infty}\la\om_y|\hat f_\si|^2\ra(k,Y)\le\la\om_y|\hat f_\si|^2\ra(k,y)\quad\hbox{ for all }y>1\hbox{  and all }k\in\bZ^d\,.
$$

Thus, for each $Y>1$, one has
$$
\ba
\si\int_{\bT^d}\la\om_yf^2_\si\ra(x,Y)\dd x+2\si^2\mu\int_0^Y\int_{\bT^d}\La(h_\si-\la h_\si\ra)^2\Ra(x,y)\dd x\dd y
\\
=-2\si\int_0^1\int_{\bT^d}\la\om(h_\si-\la h_\si\ra)\ra\cdot\grad g_\si(x,y)\dd x\dd y+\si\int_{\bT^d}\la\om_yh^2_\si\ra(x,0)\dd x&\,,
\ea
$$
so that
\be\lb{h-avh}
\ba
2\si^2\mu\int_0^Y\int_{\bT^d}\La(h_\si-\la h_\si\ra)^2\Ra(x,y)\dd x\dd y
\\
\le-2\si\int_0^1\int_{\bT^d}\la\om(h_\si-\la h_\si\ra)\ra\cdot\grad g_\si(x,y)\dd x\dd y&\,.
\ea
\ee
Indeed
$$
\la\om_yh^2_\si\ra(x,0)=-\tfrac{|\bB^d|}{|\bS^d|}\lA h^2_\si\rA_-(x,0)\le 0\,,
$$
since $h_\si(x,0,\om)=0$ for $\om_y>0$ by construction, and on the other hand, as explained above,
$$
\int_{\bT^d}\la\om_yf^2_\si\ra(x,Y)\dd x\ge 0\quad\hbox{ for all }Y>1\,.
$$
Notice that the first integral on the right hand side of \eqref{h-avh} involves only $y\in[0,1]$ since $g(x,y)=0$ for all $y>1$ by construction.

By the Cauchy-Schwarz inequality, for all $Y>1$, one has
$$
\ba
\si^2\mu\int_0^Y\int_{\bT^d}\La(h_\si-\la h_\si\ra)^2\Ra(x,y)\dd x\dd y
\\
\le\!\!\left(\si^2\int_0^1\int_{\bT^d}\left|\la\om(h_\si-\la h_\si\ra)\ra\right|^2(x,y)\dd x\dd y\right)^{1/2}\!\!\left(\int_0^1\int_{\bT^d}|\grad g_\si(x,y)|^2\dd x\dd y\right)^{1/2}
\\
\le\!\!\left(\si^2\int_0^Y\int_{\bT^d}\La(h_\si-\la h_\si\ra)^2\Ra(x,y)\dd x\dd y\right)^{1/2}\!\!\left(\int_0^1\int_{\bT^d}|\grad g_\si(x,y)|^2\dd x\dd y\right)^{1/2}&,
\ea
$$
so that
$$
\si^2\int_0^Y\int_{\bT^d}\La(h_\si-\la h_\si\ra)^2\Ra(x,y)\dd x\dd y\le\int_0^1\int_{\bT^d}|\grad g_\si(x,y)|^2\dd x\dd y\,.
$$
Observe that
$$
h_\si-\la h_\si\ra=f_\si-\la f_\si\ra
$$
since $g_\si=f_\si-h_\si$ is independent of $\om$ by construction. Hence
$$
\si^2\mu\int_0^Y\int_{\bT^d}\La(h_\si-\la h_\si\ra)^2\Ra(x,y)\dd x\dd y\le\int_0^1\int_{\bT^d}|\grad g_\si(x,y)|^2\dd x\dd y\,,
$$
and since this inequality holds for all $Y>0$, we conclude that
$$
\si^2\mu\int_0^\infty\int_{\bT^d}\La(h_\si-\la h_\si\ra)^2\Ra(x,y)\dd x\dd y\le\int_0^1\int_{\bT^d}|\grad g_\si(x,y)|^2\dd x\dd y\,.
$$


\subsection{Step 6}


Summarizing, for each $S\in W^{1,\infty}(\bT^d)$, the boundary value problem \eqref{RTPb} has a unique (by Step 4) solution $f_\si\in L^\infty(Z\times\bS^d)$, and this solution satisfies
$$
\|f_\si\|_{L^\infty}\le\|S\|_{L^\infty}\,,
$$
according to Step 3, and
$$
\|\si(f_\si-\la f_\si\ra)\|^2_{L^2(Z\times\bS^d)}\le\tfrac{2}{\mu}|\bS^d|\left(\|\grad S\|_{L^\infty(\bT^d)}^2+\tfrac43\|S\|^2_{L^\infty(\bT^d)}\right)\,,
$$
by Step 5.

\smallskip
Therefore, there exists a sequence $\si_n\to+\infty$ such that
$$
f_{\si_n}\to f\quad\hbox{ in }L^\infty(Z\times\bS^d)\hbox{ weak}-*\,,
$$
and
$$
J_n:=\si_n(f_{\si_n}-\la f_{\si_n}\ra)\to J\quad\hbox{ in }L^2(Z\times\bS^d)\hbox{ weak}\,.
$$
In particular, the fact that $J_n$ is bounded in $L^2(Z\times\bS^d)$ implies that
\be\lb{Flarho}
f(z,\om)=\rho(z)\hbox{ for a.e. }(z,\om)\in Z\times\bS^d\,.
\ee

\smallskip
In order to compute $J$, observe that, since $\cL 1=0$,
$$
\cL J_n=\si_n\cL f_{\si_n}=-\om\cdot\grad_zf_{\si_n}\,.
$$
Since the linear operator $\cL$ is continuous on $L^2(\bS^d)$, one has
$$
\cL J_n\to\cL J\hbox{  in }L^2(Z\times\bS^d)\hbox{ weak,}
$$
and since
$$
\om\cdot\grad_zf_{\si_n}\to\om\cdot\grad\rho\quad\hbox{ in }\cD'(Z\times\bS^d)\,,
$$
we conclude that
$$
\cL J(z,\om)=\om\cdot\grad\rho(z)\hbox{ for a.e. }(z,\om)\in Z\times\bS^d\,.
$$
Beside, since $J_n(z,\cdot)\in(\Ker\cL)^\perp$ for all $n$ and a.e. $z\in Z$, one has also
$$
J(z,\cdot)\in(\Ker\cL)^\perp\,,\qquad\hbox{ for a.e. }z\in Z\,.
$$
Applying the Fredholm alternative to $\cL$, we conclude that
\be\lb{FlaJ}
J(z,\om)=-\Om(\om)\cdot\grad\rho(z)\,,\qquad\hbox{ for a.e. }(z,\om)\in Z\times\bS^d\,.
\ee
Notice that the formula above for $J$ and the $L^2$ bound on $J_n$ imply that
$$
\grad\rho\in L^2(Z)\,.
$$

Let now $\phi\in C^\infty_c(\bT^d\times(0,+\infty))$. Multiplying both sides of the equation for $f_{\si_n}$ in \eqref{RTPb} by
$$
\si_n\phi(z)+\Om(\om)\cdot\grad\phi(z)
$$
and integrating both sides in $(z,\om)$, we obtain
\be\lb{IdentLim}
\ba
\int_0^\infty\int_{\bT^d}\left(\phi(z)\Div_z(\si_n\la\om f_{\si_n}\ra)+\grad\phi(z)\cdot\la\Om(\om\cdot\grad_zf_{\si_n})\ra\right)\dd x\dd y
\\
+\int_0^\infty\int_{\bT^d}\si_n^2\phi(z)\la\cL f_{\si_n}\ra\dd x\dd y+\int_0^\infty\int_{\bT^d}\grad\phi(z)\cdot\la\si_n\Om\cL f_{\si_n}\ra\dd x\dd y=0&\,.
\ea
\ee
Since $\cL$ is self-adjoint on $L^2(\bS^d)$ (see Lemma \ref{L-ImL}), one has
$$
\la\cL f_{\si_n}\ra=\la f_{\si_n}\cL^*1\ra=0\,,
$$
and
$$
\la\si_n\Om\cL f_{\si_n}\ra=\la\si_nf_{\si_n}\cL\Om\ra=\la\si_n\om f_{\si_n}\ra\,.
$$
Hence the third term on the left hand side of \eqref{IdentLim} is $0$, while the first and last terms on the left hand side of \eqref{IdentLim} combine to give
$$
\int_0^\infty\int_{\bT^d}\left(\Div_z(\si_n\la\om f_{\si_n}\ra\phi(z))+\grad\phi(z)\cdot\la\Om(\om\cdot\grad_zf_{\si_n})\ra\right)\dd x\dd y=0\,.
$$

The first integral on the left hand side is simplified by using Green's formula:
$$
\int_0^\infty\int_{\bT^d}\Div_z(\si_n\la\om f_{\si_n}\ra\phi(z))\dd x\dd y=-\int_{\bT^d}\si_n\la\om f_{\si_n}\ra\phi(x,0)\dd x\,.
$$
Next we use the boundary condition verified by $f_{\si_n}$, i.e.
$$
f_{\si_n}(x,0,\om)=\frac{S(x)}{1+\ka\si_n}+\frac{\ka\si_n}{1+\ka\si_n}\lA f_{\si_n}\rA_-(x)\,,\quad\om_y>0\,,
$$
or equivalently
$$
f_{\si_n}(x,0,\om)=S(x)-\ka\si_n\tfrac{|\bS^d|}{|\bB^d|}\la\om f_{\si_n}\ra(x,0)\,,\quad\om_y>0\,.
$$
Since the right hand side of this identity is independent of $\om$, one has also
\be\lb{BCfsin}
\lA f_{\si_n}\rA_+(x,0)=S(x)-\ka\si_n\tfrac{|\bS^d|}{|\bB^d|}\la\om f_{\si_n}\ra(x,0)\,,\quad x\in\bT^d\,.
\ee
Thus
$$
\ba
\int_0^\infty\int_{\bT^d}\Div_z(\si_n\la\om f_{\si_n}\ra\phi(z))\dd x\dd y=-\int_{\bT^d}\si_n\la\om f_{\si_n}\ra\phi(x,0)\dd x
\\
=\tfrac{|\bB^d|}{\ka|\bS^d|}\int_{\bT^d}(S(x)-\lA f_{\si_n}\rA_+(x,0))\phi(x,0)\dd x&\,.
\ea
$$
In other words
$$
\ba
\tfrac{|\bB^d|}{\ka|\bS^d|}\int_{\bT^d}(S(x)-\lA f_{\si_n}\rA_+(x,0))\phi(x,0)\dd x
\\
=\int_0^\infty\int_{\bT^d}\grad\phi(z)\cdot\la\Om(\om\cdot\grad_zf_{\si_n})\ra\dd x\dd y
\ea
$$
for all $\phi\in C^\infty_c(\bT^d\times(0,+\infty))$.

As $n\to\infty$, the integral on the right hand side satisfies
$$
\ba
\int_0^\infty\int_{\bT^d}\grad\phi(z)\cdot\la\Om(\om\cdot\grad_zf_{\si_n})\ra\dd x\dd y=-\int_0^\infty\int_{\bT^d}\grad\phi(z)\cdot\la\Om\cL J_n\ra\dd x\dd y
\\
\to-\int_0^\infty\int_{\bT^d}\grad\phi(z)\cdot\la\Om\cL J\ra\dd x\dd y=-\int_0^\infty\int_{\bT^d}\grad\phi(z)\cdot\la(\cL^*\Om)J\ra\dd x\dd y
\\
=-\int_0^\infty\int_{\bT^d}\grad\phi(z)\cdot\la\om J\ra\dd x\dd y=\int_0^\infty\int_{\bT^d}\la\om\otimes\Om\ra:\grad\phi(z)\otimes\grad_z\rho(z)\ra\dd x\dd y
\\
=\frac{\la\om\cdot\Om\ra}{d+1}\int_0^\infty\int_{\bT^d}\grad\phi(z)\cdot\grad_z\rho(z)\ra\dd x\dd y&\,.
\ea
$$

On the other hand, arguing as in Step 3 shows that
$$
\ba
\tfrac{|\bB^d|}{|\bS^d|}\d_y\int_{\bT^d}\lA f_{\si_n}\rA_+(x,y)\psi(x)\dd x=\d_y\int_{\bT^d}\la\om^+_yf_{\si_n}\ra(x,y)\psi(x)\dd x
\\
=\int_{\bT^d}\la\indc_{\om_y>0}\om_xf_{\si_n}\ra(x,y)\psi'(x)\dd x-\int_{\bT^d}\la\indc_{\om_y>0}\cL J_n\ra(x,y)\psi(x)\dd x
\\
=O(1)\hbox{ in }L^2+L^\infty(0,+\infty)&\,.
\ea
$$
By Ascoli-Arzel\`a theorem, we conclude that
$$
\int_{\bT^d}\lA f_{\si_n}\rA_+(x,0)\phi(x,0)\dd x\to\int_{\bT^d}\rho(x,0)\phi(x,0)\dd x\,.
$$

Summarizing, we have proved that
$$
\ba
\rho\in L^\infty(Z)\,,\quad\grad\rho\in L^2(Z)
\\
\tfrac{|\bB^d|}{\ka|\bS^d|}\int_{\bT^d}(S(x)-\rho(x,0))\phi(x,0)\dd x=\frac{\la\om\cdot\Om\ra}{d+1}\int_0^\infty\int_{\bT^d}\grad\phi(z)\cdot\grad_z\rho(z)\ra\dd x\dd y
\ea
$$
for all $\phi\in C^\infty_c(\bT^d\times(0,+\infty))$. One easily checks that this is the variational formulation of the boundary value problem \eqref{Diff}.

\smallskip
This proves statement (b) of Theorem \ref{T-Ppal}. 


\subsection{Step 7}


Set $Y=\bT^d\times(0,1)$. The results obtained in Step 6 imply that
$$
f_{\si_n}\rstr_{Y\times\bS^d}\to\rho\quad\hbox{  and }\quad\om\cdot\grad_zf_{\si_n}\rstr_{Y\times\bS^d}\to\om\cdot\grad_z\rho
$$
weakly in $L^2(Y\times\bS^d)$.

By applying Cessenat's trace theorem (Theorem 1 in \cite{Cessenat1}), we find that
\be\lb{ConvTrace}
f_{\si_n}(x,0,\om)\to R(x)\hbox{ weakly in }L^2(\bT^d\times\bS^d;\om_y^+\dd\om\dd x)\,,
\ee
where
$$
R=\rho\rstr_{y=0}\,.
$$

This last point requires additional explanations.  At this point, we use Cessenat's notation in \cite{Cessenat1}. Observe that, for all $x\in\bT^d$, the exit time from $Y$, starting from $y$ in the direction $\om$, is 
$$
\tau_{(x,0),\om}=1/|\om_y|\ge 1\,,\quad\hbox{ since }|\om|=1\,.
$$
Choosing the arbitrary parameter $K=1$ in the definition of the measure $\dd\xi$ on p. 832 in \cite{Cessenat1}, viz.
$$
\dd\xi\rstr_{\bT^d\times\{0\}\times\bS^d}=|\om_y|\min(1,\tau_{(x,0),\om})\dd x\dd\om\,,
$$
has its restriction to $\bT^d\times\{0\}$ given by
$$
\dd\xi\rstr_{\bT^d\times\{0\}\times\bS^d}=|\om_y|\dd x\dd\om\,.
$$
This observation justifies that the convergence holds as stated in \eqref{ConvTrace}. 

Since
$$
\rho\in L^\infty(0,+\infty;L^2(\bT^d))\quad\hbox{ and }\grad\rho\in L^2(Z)\,,
$$
one has in particular $\rho\rstr_Y\in H^1(Y)$, so that
$$
R=\rho\rstr_{y=0}\in H^{1/2}(\bT^d)\,.
$$
Since
$$
\grad\rho\in L^2(Z)\hbox{ and }\Div(\grad\rho)=0\,,
$$
we deduce from the trace theorem of J.-L. Lions for the space $H(Y,\Div)$ of vector fields in $L^2(Y;\bR^{d+1})$ with divergence in $L^2(Y)$ (see Lemma 20.2 in \cite{TartarInterp}) that
$$
\d_y\rho\rstr_{y=0}\in H^{-1/2}(\bT^d)\,.
$$

On the other hand
$$
\si_n\la\om f_{\si_n}\ra\rstr_{Y}=\si\la\om J_n\ra\rstr_{Y}\hbox{ is bounded in }L^\infty(0,1;L^2(\bT^d))\subset L^2(Y)\,,
$$
and
$$
\Div_z(\si_n\la\om f_{\si_n}\ra)=0\,.
$$
Applying again the Lions trace theorem in $H(Y,\Div)$, we conclude that
$$
\si_n\la\om_y f_{\si_n}\ra\rstr_{y=0}\to-\frac{\la\om\cdot\Om\ra}{d+1}\d_y\rho\rstr_{y=0}\hbox{ in }H^{-1/2}(\bT^d)\hbox{ weak.}
$$

Since $\grad\rho\in L^2(Z)$, we find that
$$
\d_y\rho\in L^2(\bT^d\times(0,\infty))\,,
$$
so that
$$
\ba
\int_{\bT^d}|\rho(x,y_2)-\rho(x,y_1)|^2\dd x=\int_{\bT^d}\left(\int_{y_1}^{y_2}\d_y\rho(x,z)\dd z\right)^2\dd x
\\
\le|y_2-y_1|\int_{\bT^d}\int_{y_1}^{y_2}|\d_y\rho(x,z)|^2\dd z\dd x\le|y_2-y_1|\|\grad\rho\|_{L^2(Z)}&\,.
\ea
$$
Hence
$$
\rho\in C_b(0,+\infty;L^2(\bT^d))\,.
$$
Since $\rho$ is a harmonic extension of $R=\rho\rstr_{y=0}$, we deduce from Kwasnicki's Theorem 1.1 (j) that
$$
\d_y\rho\rstr_{y=0}=-(-\Dlt_x)^{1/2}R\,.
$$
This concludes the proof of statement (c) of Theorem \ref{T-Ppal}.


\section{Open Problems}


The result obtained in the present paper suggests various questions, still open at the time of this writing.

\bigskip
\noindent
(a) Can one extend Theorem \ref{T-Ppal} to other kinetic models --- for instance, to the linearized Boltzmann equation, or even to the linearized BGK model? For instance, one could consider the Boltzmann equation for a vapor in a half-space
over its liquid, condensed phase, with a linear combination of diffuse reflection and condensation or evaporation at the boundary, assuming that the Knudsen number in the vapor is small, and that the accomodation coefficient is close to one.
At present, the proof of Theorem \ref{T-Ppal} is based on the maximum principle, which the linearized Boltzmann equation does not satisfy. Before treating the case of the linearized Boltzmann equation, it would be necessary to have a proof
of Theorem \ref{T-Ppal} based on an $L^2$ energy estimate.

\smallskip
\noindent
(b) Can one obtain in this way powers of the Laplacian other than $(-\Dlt)^{1/2}$? The harmonic extension result recalled in section \ref{SS-HarmExt} suggests that one should seek a linearized Boltzmann equation leading to the diffusion 
operator
$$
y^{\frac2\g-2}\Dlt_x+\g^2c_\g^{\g/2}\d^2_y\,.
$$
The scattering operator of such a linearized Boltzmann equation must be strongly anisotropic; besides, the scattering coefficient should vanish either as $y\to+\infty$ or as $y\to 0$ depending on whether $\g\in(0,1)$ or $\g\in(1,2)$.

\smallskip
\noindent
(c) Can one extend the result in Theorem \ref{T-Ppal} to other domains than half-spaces? Assuming that one can derive from the linearized Boltzmann equation in a bounded (spatial) domain $\Om$ with smooth boundary the diffusion
problem
$$
\left\{
\ba
{}&\Dlt\rho(x)=0\,,&\quad x\in\Om\,,
\\
&\left(\rho+\ka\frac{\d\rho}{\d n}\right)\Bigg|_{\d\Om}=S\,,&
\ea
\right.
$$
the trace $\rho\rstr_{\d\Om}$ must satisfy the equation
$$
\rho\rstr_{\d\Om}+\ka\L\rho\rstr_{\d\Om}=S\,,
$$
where $\L$ is the Dirichlet-to-Neumann operator, defined as follows. 

For $u\in H^{1/2}(\d\Om)$, we set
$$
\L u:=\frac{\d U}{\d n}\Bigg|_{\d\Om}
$$
where $U\in H^1(\Om)$ is the unique solution of
$$
-\Dlt U=0\hbox{ in }\Om\,,\qquad U\rstr_{\d\Om}=u\,.
$$
We recall that the linear map $u\mapsto U$ is continuous from $H^{1/2}(\d\Om)$ to $H^1(\Om)$. By Green's formula
$$
\La \frac{\d U}{\d n},\phi\Ra:=\int_{\Om}\grad U(x)\cdot\grad\phi(x)\dd x\,,
$$
so that, for all $\phi\in H^1(\Om)$
$$
\left|\La \frac{\d U}{\d n},\phi\Ra\right|\le\|\grad U\|_{L^2(\Om)}\|\grad\phi\|_{L^2(\Om)}\le C\|u\|_{H^{1/2}(\d\Om)}\grad\phi\|_{L^2(\Om)}\,.
$$
This defines $\d U/\d n\rstr_{\d\Om}$ as an element of $H^{-1/2}(\d\Om)$, and shows that the linear map $\L$ is continuous from $H^{1/2}(\d\Om)$ to $H^{-1/2}(\d\Om)$. Observe that $\L$ is self-adjoint since
$$
\la\cL u,\phi\ra=\int_{\Om}\grad u(x)\cdot\grad\phi(x)\dd x=\la\cL \phi,u\ra\,.
$$
Besides, $\L\ge 0$ and $\Ker\cL=\bR=\{\hbox{ constants }\}$, since
$$
\la\cL u,u\ra=\|\grad u\|_{L^2(\Om)}^2\,.
$$

Call $\Dlt^{\d\Om}$ the Laplace-Beltrami operator\footnote{At variance with the definition commonly used in geometry, we normalize this operator so that $\Dlt^{\d\Om}\le 0$.} for the restriction of the Euclidean metric to $\d\Om$. Both $\L$ and 
$(-\Dlt^{\d\Om})^{1/2}$ are unbounded self-adjoint operators on $L^2(\d\Om)$, mapping $H^{1/2}(\d\Om)$ to $H^{-1/2}(\d\Om)$. 

However, $\L$ and $(-\Dlt^{\d\Om})^{1/2}$ are different in general, as shown by the following example. 

\smallskip
\noindent
\textbf{Example.} Let $\Om$ be the open unit ball of $\bR^{d+1}$, so that $\d\Om=\bS^d$ is the $d$-dimensional unit sphere. Using spherical coordinates, i.e. setting $r=|z|$ and $\om:=z/|z|$ for $z\not=0$, one has
$$
\Dlt_zf(z)=r^{-d}\d_r(r^d\d_rf(r\om))+r^{-2}\Dlt^{\bS^d}_\om f(r\om)\,.
$$
Denote by $0=\l_0<\l_1\le\l_2\ldots$ the sequence of eigenvalues of $\Dlt^{\bS^d}$ counted with multiplicities, and let $(e_n)_{n\ge 0}$ be a orthonormal and complete system in $L^2(\d\Om)$, with $-\Dlt^{\bS^d}e_n=\l_ne_n$. For each
$f\equiv f(r,\om)\in L^2(\d\Om)$, set
$$
f_n(r):=(e_n|f(r,\cdot))_{L^2(\d\Om)}\,,\qquad n\in\bN\,,\,\,r>0\,.
$$
Then $\Dlt_zf=0$ on $\Om$ if and only if
$$
r^{2-d}\d_r(r^d\d_rf_n(r))-\l_nf_n(r)=0\,,\quad n\in\bN\,,\,\,r>0\,,
$$
or equivalently
$$
r^2f_n''(r)+drf'_n(r)-\l_nf_n(r)=0\,,\quad n\in\bN\,,\,\,r>0\,.
$$
This differential equation has a two-dimensional space of solutions over the half-line $(0,+\infty)$, viz.
$$
\{C_+r^{\a_+(n)}+C_-r^{\a_-(n)}\,,\,C_\pm\in\bR\}
$$
where
$$
\a_\pm(n):=-\tfrac{d-1}2\pm\sqrt{\tfrac{(d-1)^2}{4}+\l_n}\,,\quad n\ge 0\,.
$$
Observe that $\a_-\le 1-d$ for all $n\in\bN$, since $\l_n\ge 0$. Hence $2(\a_--1)+d\le-d\le 1$, so that
$$
\int_0^1|r^{\a_--1}|^2r^d\dd r=+\infty\,.
$$
Hence all harmonic functions in $H^1(\Om)$ in the unit ball $\Om$ are of the form
$$
f(r\om)=\sum_{n\in\bN}c_nr^{\a_+(n)}e_n(\om)\,,
$$
and
$$
\d_rf(r\om)\rstr_{r=1}=\sum_{n\in\bN}c_n\a_-(n)e_n(\om)\quad\hbox{ while }\quad f(\om)=\sum_{n\in\bN}c_ne_n(\om)\,.
$$
The formula for $\a_+(n)$ indicates that, in this case
$$
\L=-\tfrac{d-1}2+\sqrt{\tfrac{(d-1)^2}{4}-\Dlt^{\bS^d}}\,.
$$
In particular
$$
\L\not=(-\Dlt^{\bS^d})^{1/2}\,,\quad\hbox{ unless }d=1\,.
$$



\begin{thebibliography}{99}

\bibitem{Schm}
P. Aceves-S\'anchez, C. Schmeiser:
{\em Fractional diffusion limit of a linear kinetic equation in a bounded domain},
arXiv: 1607.00855 [math.AP]

\bibitem{BGP}
C. Bardos, F. Golse, B. Perthame, R. Sentis:
{\em The nonaccretive radiative transfer equations, existence of solutions and Rosseland approximation},
J. Funct. Anal. \textbf{77} (1988), 434--460.

\bibitem{BBGS}
C. Bardos, E. Bernard, F. Golse, R. Sentis:
{\em The Diffusion Approximation for the Linear Boltzmann Equation with Vanishing Scattering Coefficient},
Commun. Math. Sci. \textbf{13} (2015), 641--671.

\bibitem{BenMelP}
N. Ben Abdallah, A. Mellet, M. Puel:
{\em Anomalous diffusion limit for kinetic equations with degenerate collision frequency},
Math. Models and Methods Appl. Sci., \textbf{21} (2011), 2249--2262.

\bibitem{BLP}
A. Bensoussan, J.-L. Lions, G. C. Papanicolaou:
{\em Boundary layers and homogenization of transport processes}, 
Publ. Res. Inst. Math. Sci. \textbf{15} (1979), 53--157.

\bibitem{Brezis}
H. Brezis:
``Functional Analysis, Sobolev Spaces and Partial Differential Equations'',
Springer New-York, Dordrecht, Heidelberg, London, 2011.

\bibitem{BSS}
C. Bardos, R. Santos, R. Sentis:
{\em Diffusion approximation and computation of the critical size},
Trans. Amer. Math. Soc. {284} (1984), 617--649. 

\bibitem{Chandra}
S. Chandrasekhar:
``Radiative Transfer'',
Oxford, Clarendon Press, 1950.

\bibitem{Cessenat1}
M. Cessenat:
{\em Th\'eor\`emes de trace $L^p$ pour des espaces de fonctions de la neutronique},
C.R. Acad. Sci. Paris S\'er. I, \textbf{299} (1984), 831--834.

\bibitem{DL}
R. Dautray, J.-L. Lions:
``Analyse math\'ematique et calcul num\'erique pour les sciences et les techniques'',
Masson, Paris, 1985.

\bibitem{DesvFG}
L. Desvillettes, F. Golse:
{\em A remark concerning the Chapman-Enskog asymptotics},
In `Advances in kinetic theory and computing'', B; Perthame ed., 191--203,
Ser. Adv. Math. Appl. Sci., 22, World Sci. Publ., River Edge, NJ, 1994.

\bibitem{FF}
U. Frisch, H. Frisch:
{\em Non LTE Transfer. Asymptotic Expansion for Small $\eps$},
Mon. Not. R. Astr. Not. \textbf{181} (1977), 273--280.

\bibitem{FGBraga}
F. Golse:
{\em Fluid Dynamic Limits of the Kinetic Theory of Gases},
in ``From Particle Systems to Partial Differential Equations'', C. Bernardin and Patr\'icia Gon\c calves eds., 3--91, 
Springer Proc. in Math. and Statist. 75, Springer Verlag, Berlin, Heidelberg, 2014.

\bibitem{Hilbert1912}
D. Hilbert: 
{\em Begr\"undung der kinetischen Gastheorie}, 
Math. Ann. \textbf{72} (1912), 562--577.

\bibitem{Hasmin}
A. M. Il'in, R. Z. Has'minskii (Khasminskii):
{\em On the equations of Brownian motion} (Russian),
Teor. Verojatnost. i Primenen. \textbf{9} (1964), 466--491.

\bibitem{Kwasnicki}
M. Kwasnicki:
{\em Ten Equivalent Definitions of the Fractional Laplace Operator},
Fract. Calc. Appl. Anal. \textbf{20} (2017), 7--51.

\bibitem{LK} 
E. W. Larsen, J. B. Keller:
{\em Asymptotics solutions of neutron transport problems for small mean free paths}, 
J. Math. Phys. \textbf{15} (1974), 75--81.

\bibitem{Mellet}
A. Mellet:
{\em Fractional diffusion limit for collisional kinetic equations: a moments method}, 
Indiana Univ. Math. J., \textbf{59} (2010), 1333--1360.

\bibitem{MMM}
A. Mellet, S. Mischler, and C. Mouhot:
{\em Fractional diffusion limit for collisional kinetic equations}, 
Arch. Ration. Mech. Anal., \textbf{199} (2011), 493--525.

\bibitem{Papanico}
G. C. Papanicolaou:
{\em Asymptotic analysis of transport processes},
Bull. Amer. Math. Soc. \textbf{81} (1975), 330--392.

\bibitem{Pomran}
G. C. Pomraning:
``The Equations of Radiation Hydrodynamics'', 
Pergamon Press, 1973.

\bibitem{TartarInterp}
L. Tartar:
``An Introduction to Sobolev Spaces and Interpolation Spaces'';
Springer-Verlag, Berlin, Heidelberg, 2007.

\bibitem{WeinbergWigner}
A. Weinberg, E. Wigner:
``The Physical Theory of Neutron Chain Reactors'',
The University of Chicago Press, 1958.

\end{thebibliography}
\end{document}